\newcommand\AGaL{\mathrm{A\Gamma L}}\newcommand\AGL{\mathrm{AGL}}
\newcommand\bbF{\mathbb{F}}
\newcommand\Cay{\mathrm{Cay}}\newcommand\Cos{\mathrm{Cos}}
\newcommand\FF{\mathbb{F}}
\newcommand\GL{\mathrm{GL}}
\newcommand\Sym{\mathrm{Sym}}
\newcommand\Sy{S}
\newtheorem{theorem}{Theorem}[section]
\newtheorem{corollary}[theorem]{Corollary}
\newtheorem{lemma}[theorem]{Lemma}
\newtheorem{proposition}[theorem]{Proposition}
\theoremstyle{definition}
\newtheorem{question}[theorem]{Question}
\newtheorem{construction}[theorem]{Construction}
\newtheorem*{remark}{Remark}
\begin{document}
\title[On subgroup perfect codes in vertex-transitive graphs]{On subgroup perfect codes in vertex-transitive graphs}

\author[Xia]{Binzhou Xia}
\address{School of Mathematics and Statistics\\The University of Melbourne\\Parkville, VIC 3010\\Australia}
\email{binzhoux@unimelb.edu.au}

\author[Zhang]{Junyang Zhang}
\address{School of Mathematical Sciences \& Chongqing Key Lab of Cognitive Intelligence and Intelligent Finance\\Chongqing Normal University\\Chongqing, 401331\\P. R. China}
\email{jyzhang@cqnu.edu.cn}

\author[Zhang]{Zhishuo Zhang}
\address{School of Mathematics and Statistics\\The University of Melbourne\\Parkville, VIC 3010\\Australia}
\email{zhishuoz@student.unimelb.edu.au}

\begin{abstract}
A subset $C$ of the vertex set $V$ of a graph $\Gamma$ is called a perfect code in $\Gamma$ if every vertex in $V\setminus C$ is adjacent to exactly one vertex in $C$. Given a group $G$ and a subgroup $H$ of $G$, a subgroup $A$ of $G$ containing $H$ is called a perfect code of the pair $(G,H)$ if there exists a coset graph $\Cos(G,H,U)$ such that the set of left cosets of $H$ in $A$ is a perfect code in $\Cos(G,H,U)$. In particular, $A$ is called a perfect code of $G$ if $A$ is a perfect code of the pair $(G,1)$. In this paper, we give a characterization of $A$ to be a perfect code of the pair $(G,H)$ under the assumption that $H$ is a perfect code of $G$. As a corollary, we derive an additional sufficient and necessary condition for $A$ to be a perfect code of $G$. Moreover, we establish conditions under which $A$ is not a perfect code of $(G,H)$, which is applied to construct infinitely many counterexamples to a question posed by Wang and Zhang [\emph{J.~Combin.~Theory~Ser.~A}, 196 (2023) 105737]. Furthermore, we initiate the study of determining which maximal subgroups of $S_n$ are perfect codes.
\end{abstract}

\maketitle

\section{Introduction}
Throughout this paper, all groups are finite and all graphs  are finite, undirected and simple. Let $\Gamma$ be a graph with vertex set $V$. A subset $C$ of $V$ is called a \emph{perfect code} in $\Gamma$ if every vertex in $V\setminus C$ is adjacent to exactly one vertex in $C$. A perfect code is also referred to as an \emph{efficient dominating set}~\cite{DS2003} or \emph{independent perfect dominating set}~\cite{L2001}.

The study of perfect codes in graphs can be traced back to at least 1973, when Biggs~\cite{B1973} extended the classical settings of error-correcting perfect codes, such as Hamming codes and Lee codes, to distance-transitive graphs. Among these studies, perfect codes in Cayley graphs have emerged as one of the most popular topics~\cite{CM2013,CWX2020,DSLW2017,FHZ2017,HBZ2018,ZZ2021,Z2019}. Let $e$ denote the identity element of any group. For a group $G$ and an inverse-closed subset $S$ of $G\setminus \{e\}$, the \emph{Cayley graph} $\Cay(G,S)$ is a graph with vertex set $G$ and edge set $\{\{g,sg\}\mid s\in S,\; g\in G\}$. A subset $C$ of the group $G$ is called a \emph{perfect code} of $G$ if $C$ is a perfect code in some Cayley graph of $G$.

Problems become particularly intriguing when we restrict perfect codes of $G$ to subgroups of $G$. Indeed, researchers often aim to find perfect codes with algebraic structures, such as group structure, as these not only provide deeper theoretical insights but also enable efficient storage and manipulation in computer systems. In 2018, the authors of~\cite{HBZ2018} found a sufficient and necessary condition for a normal subgroup $A$ of $G$ to be a perfect code:
\begin{equation}\label{eq:cond}
    \text{for each $x \in G$ such that $x^2 \in A$, there exists $y \in xA$ such that $y^2 = e$.}
\end{equation}
This condition led to the complete classification of subgroup perfect codes in cyclic groups, dihedral groups and generalized quaternion groups~\cite{HBZ2018,MWWZ2020}. Later,  the authors of~\cite{CWX2020} generalized condition~\eqref{eq:cond} to the following characterization of a general subgroup of $G$ to be a perfect code of $G$.

\begin{theorem}[{\cite[Theorem~1.2]{CWX2020}}]\label{thm:1}
    Let $G$ be a group and $A\leq G$. Then the following statements are equivalent:
    \begin{enumerate}[\rm(a)]
        \item \label{enu:pc1} $A$ is a perfect code of $G$;
        \item \label{enu:pc2} there exists an inverse-closed left transversal of $A$ in $G$;
        \item \label{enu:pc3} for each $x\in G$ such that $x^2\in A$ and $|A|/|A\cap A^x|$ is odd, there exists $y\in xA$ such that $y^2=e$;
        \item \label{enu:pc4} for each $x\in G$ such that $AxA=Ax^{-1}A$ and $|A|/|A \cap A^x|$ is odd, there exists $y\in xA$ such that $y^2=e$.
    \end{enumerate}
\end{theorem}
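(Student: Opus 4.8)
The plan is to prove the cycle (a)$\Leftrightarrow$(b)$\Leftrightarrow$(c) and then deduce (c)$\Leftrightarrow$(d). The first equivalence is a routine translation between Cayley graphs and transversals; the second, which is the substance of the theorem, is handled by analysing the double cosets of $A$ in $G$ one at a time; the last is a short reduction.

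For (a)$\Leftrightarrow$(b): in $\Cay(G,S)$ a vertex $g\notin A$ is adjacent to exactly those $a\in A$ with $ga^{-1}\in S$, so it has a unique neighbour in $A$ if and only if $|gA\cap S|=1$. Replacing $S$ by $S\setminus A$ changes no edge between $A$ and $G\setminus A$, so we may assume $S\cap A=\varnothing$; then $A$ is a perfect code in $\Cay(G,S)$ precisely when $S$ meets every non-identity left coset of $A$ in exactly one point, i.e.\ when $S\cup\{e\}$ is an inverse-closed left transversal of $A$ in $G$. Conversely, given an inverse-closed left transversal $T$, the unique $t_0\in T\cap A$ satisfies $t_0^2=e$, and $S:=T\setminus\{t_0\}$ works. (The coset graphs $\Cos(G,1,U)$ are exactly the Cayley graphs of $G$, so this matches the paper's formulation.)

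For (b)$\Leftrightarrow$(c): since inversion $\iota\colon g\mapsto g^{-1}$ stabilises $AxA\cup Ax^{-1}A$, an inverse-closed left transversal may be built double coset by double coset. If $D=AxA$ has $D\neq D^{-1}$, then $D$ and $D^{-1}$ have equally many left cosets, and one can always match them so as to pick inverse-paired representatives; similarly, if $D=D^{-1}$ and $n:=|A|/|A\cap A^x|$ is even, the left cosets of $D$ can be matched among themselves. The only genuine constraint comes from a self-inverse double coset $D$ with $n$ odd: by parity any inverse-closed transversal of its $n$ left cosets contains a fixed point of $\iota$, hence an involution, so $D$ must contain an involution $w$; conversely, if $D=AwA$ with $w^2=e$, then each left coset $gwA$ $(g\in A)$ of $D$ contains the involution $gwg^{-1}$, and taking these as representatives gives an inverse-closed transversal of $D$. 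Finally, if $x^2\in A$ then $AxA=Ax^{-1}A$ and $xA$ is a left coset of $D=AxA$; and conversely every self-inverse double coset $D$ contains an element $z$ with $z^2\in A$ (from $AxA=Ax^{-1}A$ one obtains $a\in A$ with $xax\in A$, so $z:=xa\in xA$ has $z^2=(xax)a\in A$), the corresponding value of $n$ being unchanged. Hence (c) says exactly that every self-inverse double coset of $A$ with $n$ odd contains an involution, which by the above is equivalent to the existence of an inverse-closed left transversal, i.e.\ to (b).

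For (c)$\Leftrightarrow$(d): the implication (d)$\Rightarrow$(c) is immediate since $x^2\in A$ forces $x^{-1}\in xA\subseteq AxA$, so $AxA=Ax^{-1}A$. For (c)$\Rightarrow$(d), given $x$ with $AxA=Ax^{-1}A$ and $n$ odd, choose $a\in A$ with $xax\in A$ as above; then $x':=xa$ satisfies $(x')^2\in A$, $x'A=xA$, and $|A\cap A^{x'}|=|A\cap A^x|$ (as $a\in A$), so $|A|/|A\cap A^{x'}|$ is again odd, and (c) applied to $x'$ yields $y\in x'A=xA$ with $y^2=e$. (One may also isolate the lemma that $xA$ contains an element of order dividing $2$ if and only if $|A\cap x^{-1}Ax^{-1}|$ is odd — the map $a\mapsto x^{-1}a^{-1}x^{-1}$ is an involution of $A\cap x^{-1}Ax^{-1}$ whose fixed points are exactly the $a$ with $(xa)^2=e$ — upon which both (c) and (d) become parity conditions and the equivalence is transparent.) The step I expect to be hardest is the pair of "can always be matched" claims in the analysis of double cosets with $D\neq D^{-1}$ or with $n$ even: these reduce to the existence of perfect matchings in certain bipartite graphs on left cosets defined by double-coset intersection numbers, which I would attack with Hall's marriage theorem together with a regularity or counting argument.
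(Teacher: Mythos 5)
The first thing to say is that the paper itself contains no proof of Theorem~\ref{thm:1}: it is quoted from \cite{CWX2020} (with the remark that the equivalence of (a) and (d) also appears in \cite{ZZ2021}), so there is no in-paper argument to compare yours against, and I am judging your proposal on its own merits. Your skeleton --- (a)$\Leftrightarrow$(b) via the Cayley-graph/transversal translation, (b)$\Leftrightarrow$(c) by assembling an inverse-closed transversal block-by-block over the sets $AxA\cup Ax^{-1}A$, and (c)$\Leftrightarrow$(d) by replacing $x$ with $z=xa\in xA$ satisfying $z^2\in A$ --- is sound and is essentially the standard route for this theorem.

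Two corrections are needed, neither fatal. First, the step you single out as hardest (the two ``can always be matched'' claims) is in fact automatic, and Hall's theorem is not the right tool: for any left cosets $uA$ and $vA$ there exists $y\in uA$ with $y^{-1}\in vA$ if and only if $u^{-1}\in AvA$ (given $u^{-1}=avb$ with $a,b\in A$, take $y=ua$, so $y^{-1}=a^{-1}u^{-1}=vb\in vA$); when $uA\subseteq AxA$ and $vA\subseteq Ax^{-1}A$ this condition reads $u^{-1}\in Ax^{-1}A$, which holds trivially. Hence \emph{any} bijection between the left cosets of $D$ and of $D^{-1}$, and any pairing of the cosets of a self-inverse $D$ among themselves (leaving one coset over when $n$ is odd, which you may choose to be $zA$), admits inverse-paired representatives; this also sidesteps the awkward fact that Hall's marriage theorem does not directly apply to the non-bipartite matching you would need in the self-inverse case with $n$ even. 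Second, your parenthetical lemma is false as stated: the fixed-point argument only shows that the number of $a\in A$ with $(xa)^2=e$ is congruent to $|A\cap x^{-1}Ax^{-1}|$ modulo $2$, so only the ``if'' direction holds; for a counterexample to ``only if'' take $x=e$ and $|A|$ even, where $xA=A$ contains $e$ (and involutions) while $|A\cap x^{-1}Ax^{-1}|=|A|$ is even. Since the lemma is never used in your main argument, simply delete that aside. Finally, two small points of hygiene: ``involution'' should throughout be ``element $y$ with $y^2=e$'' (the double coset $A$ itself has $n=1$ odd and may contain only $e$), and you should record the one-line check $|A\cap A^x|=|A\cap A^{x^{-1}}|$ that identifies $|A|/|A\cap A^x|$ with the number of left cosets of $A$ in $AxA$, which your block analysis uses implicitly.
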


\begin{remark}
    Since most of the literature adopts the left coset convention, we present the ``left coset version" of~\cite[Theorem~1.2]{CWX2020} in Theorem~\ref{thm:1}. Moreover, we note that the equivalence of~\eqref{enu:pc1} and~\eqref{enu:pc4} is also proved in~\cite{ZZ2021}.
\end{remark}

Noting that distance-transitive graphs and Cayley graphs are special cases of vertex-transitive graphs, Wang and the second author~\cite{WZ2023} initiated the study of perfect codes in vertex-transitive graphs. A graph $\Gamma$ is said to be \emph{$G$-vertex-transitive} if $G$ is a group of automorphisms of $\Gamma$ that acts transitively on the vertex set of $\Gamma$. Since a graph is $G$-vertex-transitive if and only if it can be represented as a coset graph $\Cos(G,H,U)$ for some subgroup $H$ of $G$ and inverse-closed subset $U\subseteq G\setminus H$, they generalized the subgroup perfect codes of a group to the following: given a finite group $G$ and a subgroup $H$ of $G$, a subgroup $A$ of $G$ containing $H$ is called a \emph{subgroup perfect code of the pair $(G,H)$} if there exists a coset graph $\Cos(G,H,U)$ such that the set of left cosets of $H$ in $A$ is a perfect code in $\Cos(G,H,U)$. A sufficient and necessary condition for $A$ to be a perfect code of the pair $(G,H)$ is given in~\cite{WZ2023} as follows.

\begin{theorem}[{\cite[Theorem 3.1]{WZ2023}}]\label{thm:pc}
    Let $G$ be a group and $H\leq A\leq G$. Then $A$ is a perfect code of $(G,H)$ if and only if there exists a left transversal $X$ of $A$ in $G$ such that $XH=HX^{-1}$.
\end{theorem}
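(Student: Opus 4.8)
The plan is to unwind the statement ``the set of left cosets of $H$ in $A$ is a perfect code in $\Cos(G,H,U)$'' into a purely multiplicative condition inside $G$, and then to recognise that condition as the one appearing in the theorem. Throughout, write $\hat U:=H\cup U$, which (when $U$ is an inverse-closed union of $(H,H)$-double cosets disjoint from $H$, as required for $\Cos(G,H,U)$ to be defined) is an inverse-closed union of $(H,H)$-double cosets \emph{containing} $H$. Since the vertices of $\Cos(G,H,U)$ are the left cosets of $H$ with $g_1H\sim g_2H$ iff $g_1^{-1}g_2\in U$, the closed neighbourhood of $aH$ equals $\{gH:g\in a\hat U\}$; hence the set of left cosets of $H$ inside $A$ is a perfect code (an efficient dominating set) in $\Cos(G,H,U)$ precisely when the translates $a\hat U$, as $a$ runs over a left transversal $T$ of $H$ in $A$, partition $G$. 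Thus the theorem reduces to: \emph{$A$ is a perfect code of $(G,H)$ iff there is an inverse-closed union $\hat U$ of $(H,H)$-double cosets with $e\in\hat U$ and $\bigsqcup_{a\in T}a\hat U=G$.} I also record the identity $|\hat U|=[G:A]\,|H|$ forced by such a partition, which will convert set inclusions into equalities below.

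\textbf{($\Leftarrow$)} Given a left transversal $X$ of $A$ in $G$ with $XH=HX^{-1}=:\hat U$: this $\hat U$ is both a union of left cosets of $H$ ($=XH$) and a union of right cosets of $H$ ($=HX^{-1}$), hence a union of $(H,H)$-double cosets; it is inverse-closed because $(XH)^{-1}=HX^{-1}$; and $\bigcup_{a\in T}a\hat U=\bigcup_{a\in T}aHX^{-1}=AX^{-1}=G$ since $X^{-1}$ is a right transversal, the union being disjoint by the size count $[A:H]\cdot|\hat U|=|G|$. The one thing left to secure is $e\in\hat U$, equivalently that the representative $x_0$ of the coset $A$ inside $X$ lies in $H$ — and it need not (e.g.\ $G=\Sym(3)$, $A=\langle(1\,2)\rangle$, $H=1$, $X=\{(1\,2),(1\,3),(2\,3)\}$). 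To fix this I would argue: the double coset $Hx_0H$ lies in $A$ and in $\hat U$, and the only left coset of $H$ that lies in both $A$ and $\hat U$ is $x_0H$, so $Hx_0H=x_0H$ and therefore $x_0\in\N_G(H)$; moreover $x_0\in HX^{-1}$ gives $x_0=hx_0^{-1}$ for some $h\in H$, so $x_0^2\in H$; together these give $x_0H=Hx_0^{-1}$, and hence replacing $x_0$ by $e$ in $X$ produces a left transversal $\tilde X$ still satisfying $\tilde XH=H\tilde X^{-1}$ and now with $e\in\tilde XH$. Invoking the reformulation with $\hat U:=\tilde XH$ finishes this direction.

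\textbf{($\Rightarrow$)} Conversely, given $\hat U$ as in the reformulation: for each right coset $C$ of $A$ in $G$ pick $g\in C$; the partition hypothesis yields $a\in T$ with $g\in a\hat U$, so $a^{-1}g\in C\cap\hat U$, showing $C\cap\hat U\ne\emptyset$, and I choose $y_C\in C\cap\hat U$. Then $Y:=\{y_C\}$ is a right transversal of $A$ contained in $\hat U$, so $X:=Y^{-1}$ is a left transversal with $X\subseteq\hat U$ and $X^{-1}=Y\subseteq\hat U$. Consequently $XH\subseteq\hat UH=\hat U$ and $HX^{-1}\subseteq H\hat U=\hat U$, while $|XH|=|HX^{-1}|=[G:A]\,|H|=|\hat U|$, so both inclusions are equalities and $XH=\hat U=HX^{-1}$.

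\textbf{Main obstacle.} The index and coset bookkeeping is routine; the real content is twofold: the equivalence ``$A/H$ is an efficient dominating set $\iff$ $\{a\hat U\}$ is a partition of $G$'' used in the reformulation, and — the harder point — the normalisation in the ($\Leftarrow$) direction, namely that the chosen representative of $A$ may be taken to be the identity. Establishing that this representative normalises $H$ and squares into it, so that re-basing its coset at $e$ does not destroy the symmetry $XH=HX^{-1}$, is where the genuine work lies; the size identity $|\hat U|=[G:A]\,|H|$ is the lever that makes the ``inclusion $\Rightarrow$ equality'' steps go through in both directions.
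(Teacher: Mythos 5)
Your argument is essentially correct, and since the paper itself does not prove this statement (it is quoted from~\cite{WZ2023}), I can only judge it on its own terms: the reformulation of ``the left cosets of $H$ in $A$ form a perfect code in $\Cos(G,H,U)$'' as ``the translates $a\hat U$, $a\in T$, partition $G$'' with $\hat U=H\cup U$, and the two conversions between $\hat U$ and a transversal, are all sound, including the genuinely delicate point you isolate, namely re-basing the representative $x_0$ of the coset $A$ at the identity (your chain $Hx_0H=x_0H$, $x_0^2\in H$, hence $x_0H=Hx_0^{-1}$ does justify replacing $x_0$ by $e$ without destroying $XH=HX^{-1}$). Two small points should be made explicit: in ``$x_0=hx^{-1}$ for some $h\in H$, $x\in X$'' you need the one-line observation that $x^{-1}=h^{-1}x_0\in A$ forces $x\in X\cap A=\{x_0\}$ before concluding $x_0^2\in H$; and your partition reformulation uses the efficient-dominating-set reading of ``perfect code'' (independence of the code is part of the condition), which is the intended meaning here even though the paper's one-line definition omits the word ``independent''.
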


Since $A$ is a perfect code of the pair $(G,1)$ if and only if $A$ is a perfect code of $G$, studying perfect codes of the group $G$ is equivalent to studying the special case of perfect codes of the pair $(G,H)$ when $H=1$. Hence, it is natural to consider the generalization of Theorem~\ref{thm:1} to a characterization of perfect codes of the pair $(G,H)$. The following attempt in~\cite{WZ2023} generalizes the implication of~Theorem~\ref{thm:1}\eqref{enu:pc4} from~Theorem~\ref{thm:1}\eqref{enu:pc1}.

\begin{theorem}[{\cite[Theorem 3.6]{WZ2023}}]\label{thm:1.2}
    Let $G$ be a group and $H\leq A\leq G$. If $A$ is a perfect code of $(G,H)$, then for each $g\in G$, either $|A\{g,g^{-1}\}A|/|A|$ is even or the left coset $gA$ contains an element $x$ such that $x^2\in H^y$ for some $y\in A$.
\end{theorem}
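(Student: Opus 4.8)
The plan is to invoke Theorem~\ref{thm:pc}. Since $A$ is a perfect code of $(G,H)$, there is a left transversal $X$ of $A$ in $G$ with $XH=HX^{-1}$; equivalently, $S:=XH=\bigsqcup_{x\in X}xH$ --- which picks out exactly one left coset of $H$ in each left coset of $A$ --- is inverse-closed, and it also equals $\bigsqcup_{x\in X}Hx^{-1}$. Fix $g\in G$. If $AgA\neq Ag^{-1}A$, then $A\{g,g^{-1}\}A=AgA\sqcup Ag^{-1}A$ with $|AgA|=|Ag^{-1}A|$, so $|A\{g,g^{-1}\}A|/|A|=2|AgA|/|A|$ is even; thus I may assume $D:=AgA=Ag^{-1}A$, and, writing $k:=|D|/|A|$, I may also assume $k$ is odd. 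The goal becomes to find $x\in gA$ with $x^{2}\in H^{y}$ for some $y\in A$.

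The first step is a reduction. Since $y(gA)y^{-1}=ygA$ for $y\in A$ (because $Ay^{-1}=A$), the sets $\{yxy^{-1}:x\in gA\}$, $y\in A$, exhaust $D=AgA$; and $(yxy^{-1})^{2}\in H\iff x^{2}\in H^{y}$. Hence ``$gA$ contains $x$ with $x^{2}\in H^{y}$ for some $y\in A$'' is equivalent to ``$D$ contains an element $z$ with $z^{2}\in H$,'' and it is this that I shall prove. Restrict $S$ to $D$: if $X\cap D=\{x_{1},\dots,x_{k}\}$ then $S_{D}:=S\cap D=\bigsqcup_{i=1}^{k}x_{i}H=\bigsqcup_{i=1}^{k}Hx_{i}^{-1}$ is again inverse-closed. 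Crucially, every element of the ``diagonal piece'' $x_{i}H\cap Hx_{i}^{-1}$ squares into $H$: if $z=x_{i}h\in Hx_{i}^{-1}$ then $zx_{i}=x_{i}hx_{i}\in H$, so $z^{2}=(x_{i}h)(x_{i}h)=(x_{i}hx_{i})h\in H$. It therefore suffices to show $x_{i}H\cap Hx_{i}^{-1}\neq\emptyset$ for some $i$; note that inversion is an involution of $S_{D}$ carrying $x_{i}H\cap Hx_{j}^{-1}$ onto $x_{j}H\cap Hx_{i}^{-1}$, and so preserving each diagonal piece. If $|H|$ is odd we are done at once: $|S_{D}|=k|H|$ is then odd, the involution has a fixed point $z$, and $z^{2}=e\in H$.

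The main work is the case $|H|$ even, where $|S_{D}|$ is even whatever the parity of $k$ and the crude count is useless. Here the plan is to use the finer double-coset geometry of $D$: the left cosets $x_{i}A$ and the right cosets $Ax_{i}^{-1}=(x_{i}A)^{-1}$ both partition $D$; every cell $x_{i}A\cap Ax_{j}^{-1}$, when non-empty, has size $|A\cap A^{x_{i}}|=|A|/k$, so a count of cardinalities forces all $k^{2}$ cells to be non-empty; inversion interchanges the $(i,j)$ cell with the $(j,i)$ cell; and $S_{D}$ meets each $x_{i}A$ in the single coset $x_{i}H$ and each $Ax_{i}^{-1}$ in the single coset $Hx_{i}^{-1}$. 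One then exploits the identity $XH=HX^{-1}$ \emph{globally} --- which ties the $H$-coset chosen inside a given $A$-coset of $D$ to the one chosen inside its inverse --- to build an involution on the resulting refined ``diagonal'' data whose fixed points force some $x_{i}H\cap Hx_{i}^{-1}$ to be non-empty, using that the relevant index $k$ is odd. I expect this last step to be the main obstacle. The difficulty is exactly the non-normality of $H$: the naive self-map of the $k$ cosets $x_{1}H,\dots,x_{k}H$ sending $x_{i}H$ to the left coset of $H$ containing $x_{i}^{-1}$ need not be an involution unless $x_{i}^{-1}Hx_{i}\le A$ for all $i$; when $H\trianglelefteq G$ this holds, coset graphs over $H$ are just the Cayley graphs of $G/H$, and the statement drops out of Theorem~\ref{thm:1} applied to $G/H$ --- so it is precisely the general $H$ that must be dealt with directly.
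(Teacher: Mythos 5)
Your reduction and the easy cases are fine: via Theorem~\ref{thm:pc} you get a left transversal $X$ of $A$ with $XH=HX^{-1}$; the step from ``$gA$ contains $x$ with $x^2\in H^y$ for some $y\in A$'' to ``$D=AgA$ contains $z$ with $z^2\in H$'' is correct; the case $AgA\neq Ag^{-1}A$ is immediate; and when $|H|$ is odd the inverse-closed set $S_D=(X\cap D)H$ has odd cardinality $k|H|$, so inversion has a fixed point $z$ with $z^2=e\in H$. But the remaining case, $|H|$ even (with $AgA=Ag^{-1}A$ and $k$ odd), is precisely where the statement has content beyond the classical $H=1$ situation, and there you offer only a plan which you yourself flag as the main obstacle. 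As written this is a genuine gap, not a proof; note also that the theorem is only quoted in this paper from \cite{WZ2023} and not reproved here, so your argument would have to stand on its own, and it does not yet.

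Moreover, the ingredients you assemble for that case cannot suffice by themselves. Setting $n_{ij}=|x_iH\cap Hx_j^{-1}|$, what you extract from $XH=HX^{-1}$ restricted to $D$ is a symmetric $k\times k$ matrix with every row sum equal to $|H|$, together with the (correct) fact that all $k^2$ cells $x_iA\cap Ax_j^{-1}$ are nonempty of size $|A|/k$. For $k$ odd and $|H|$ even there exist symmetric matrices with these row sums and zero diagonal (for instance $k=3$, $|H|=2$, $n_{12}=n_{13}=n_{23}=1$), so no counting or parity argument at this level of granularity can force a nonempty diagonal piece $x_iH\cap Hx_i^{-1}$; the group structure --- the way $XH=HX^{-1}$ couples the $H$-coset selected inside a left $A$-coset of $D$ with the one selected inside its inverse --- must enter in an essential and so far unspecified way. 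Your observation that the natural map sending $x_iH$ to the left $H$-coset of $S_D$ containing $x_i^{-1}$ need not be an involution when $H$ is not normal identifies the obstruction but does not circumvent it. In summary, your argument establishes the conclusion when $AgA\neq Ag^{-1}A$, when $|H|$ is odd, or when $H\trianglelefteq G$ (via Theorem~\ref{thm:1} applied to $G/H$), but not in the general case claimed by the theorem.
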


Inspired by this, the following open problem is posed in~\cite{WZ2023} regarding a potential characterization of perfect codes of the pair $(G,H)$.

\begin{question}[{\cite[Problem~5.1]{WZ2023}}]\label{ques:1}
    Does the converse of Theorem~\ref{thm:1.2} hold?
\end{question}

In this paper, we provide the following characterization of a subgroup $A$ of $G$ containing a subgroup $H\leq A$ to be a perfect code of the pair $(G,H)$ under the assumption that $H$ is a perfect code of $G$.

\begin{theorem}\label{thm:pair}
    Let $G$ be a group and $H\leq A\leq G$. Suppose that $H$ is a perfect code of $G$. Then the following statements are equivalent:
    \begin{enumerate}[\rm(a)]
        \item $A$ is a perfect code of the pair $(G,H)$;
        \item there exists an inverse-closed left transversal $X$ of $A$ in $G$ such that $XH=HX$;
        \item for each $g\in G$, there exists an inverse-closed left transversal $Y$ of $A$ in $A\{g,g^{-1}\}A$ such that $Y$ is a left transversal of $H$ in $HYH$.
    \end{enumerate}
\end{theorem}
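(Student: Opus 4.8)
The plan is to use Theorem~\ref{thm:pc} throughout as the bridge between perfect codes of the pair $(G,H)$ and left transversals of $A$ in $G$, and to bring in the hypothesis that $H$ is a perfect code of $G$ only through the following cleaning device. By Theorem~\ref{thm:1}, $H$ being a perfect code of $G$ means there is an inverse-closed left transversal $T$ of $H$ in $G$. Then for \emph{any} inverse-closed subset $\Omega\subseteq G$ that is a union of left cosets of $H$, the set $T\cap\Omega$ is again inverse-closed (being an intersection of inverse-closed sets) and meets each left coset of $H$ contained in $\Omega$ in exactly one element: if $cH\subseteq\Omega$ then its unique representative $t\in T$ lies in $cH\subseteq\Omega$, so $(T\cap\Omega)\cap cH=\{t\}$. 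Thus $T$ turns any ``left $H$-transversal of $\Omega$'' into an inverse-closed one without altering which left $H$-cosets are hit.

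With this in hand I would prove (a)$\Leftrightarrow$(b) first. The implication (b)$\Rightarrow$(a) is immediate: if $X$ is an inverse-closed left transversal of $A$ in $G$ with $XH=HX$, then $HX^{-1}=HX=XH$, so Theorem~\ref{thm:pc} applies. For (a)$\Rightarrow$(b), take by Theorem~\ref{thm:pc} a left transversal $X_0$ of $A$ in $G$ with $X_0H=HX_0^{-1}$, and set $\Omega=X_0H$. Then $\Omega H=\Omega$ and $\Omega^{-1}=HX_0^{-1}=\Omega$, so $\Omega$ is an inverse-closed union of left $H$-cosets; moreover, since $H\leq A$ and the cosets $x_0A$ ($x_0\in X_0$) are pairwise distinct, the cosets $x_0H$ are precisely the left $H$-cosets contained in $\Omega$, exactly one inside each left $A$-coset of $G$. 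Applying the cleaning device gives $X:=T\cap\Omega$, which is inverse-closed, meets each $x_0H$ exactly once — hence is a left transversal of $A$ in $G$ — and satisfies $XH=\Omega$. Finally, since $X$ is an inverse-closed subset of the inverse-closed set $\Omega$, the equality $X\cap cH=\{x\}$ forces $X\cap Hc^{-1}=\{x^{-1}\}$, so $X$ also meets each right $H$-coset of $\Omega$ exactly once, giving $HX=\Omega$. Hence $XH=HX$ and (b) holds.

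For (b)$\Leftrightarrow$(c) I would localise to the symmetrised double cosets $D_g:=A\{g,g^{-1}\}A$, which are inverse-closed, bi-$A$-invariant, and partition $G$ once one chooses one $g$ from each inversion-orbit of $(A,A)$-double cosets. Given $X$ as in (b), each $Y_g:=X\cap D_g$ is inverse-closed, is a left transversal of $A$ in $D_g$, and — using the elementary identities $(X\cap D_g)H=XH\cap D_g$ and $H(X\cap D_g)=HX\cap D_g$, valid because $D_gA=AD_g=D_g$ and $H\leq A$ — satisfies $Y_gH=HY_g$, hence $HY_gH=Y_gH$, which is exactly the statement that $Y_g$ is a left transversal of $H$ in $HY_gH$; this yields (c). Conversely, given transversals $Y_g$ as in (c) over the chosen representatives, put $X:=\bigcup_g Y_g$: it is an inverse-closed left transversal of $A$ in $G$, and from $Y_gH=HY_gH$ together with $Y_g=Y_g^{-1}$ one gets $HY_g=(Y_gH)^{-1}=(HY_gH)^{-1}=Y_gH$, so $XH=\bigcup_g Y_gH=\bigcup_g HY_g=HX$, giving (b). Neither half of (b)$\Leftrightarrow$(c) needs the hypothesis on $H$.

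I expect the main obstacle to be the bookkeeping in (a)$\Rightarrow$(b): one must verify that $\Omega=X_0H$ contains exactly one left $H$-coset and exactly one right $H$-coset inside each $A$-coset (so that the cleaning step preserves the transversal property), and that the cleaned set $X=T\cap\Omega$ recovers \emph{both} $XH=\Omega$ and $HX=\Omega$. Everything else is routine coset and double-coset manipulation, and I do not anticipate a genuine difficulty there.
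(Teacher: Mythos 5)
Your proposal is correct, but it reaches the theorem by a genuinely different route on the one implication where the hypothesis on $H$ is actually used. The paper proves (a)$\Rightarrow$(c) first: starting from a transversal $T$ of $A$ with $TH=HT^{-1}$, it builds an auxiliary graph on the left $H$-cosets of $HXH$, notes that its components are complete or complete bipartite, extracts a perfect matching from the bipartite and even complete components to produce inverse-closed pairs $z_j,z_j^{-1}$, and invokes the hypothesis that $H$ is a perfect code of $G$ through the involution criterion of Theorem~\ref{thm:1} to supply representatives $y_i$ with $y_i^2=e$ for the odd complete components; statement (b) is then assembled from these local sets. You instead prove (a)$\Rightarrow$(b) directly, using the hypothesis through the other characterization in Theorem~\ref{thm:1}: an inverse-closed left transversal $T$ of $H$ in $G$, intersected with the inverse-closed union of left $H$-cosets $\Omega=X_0H=HX_0^{-1}$ coming from Theorem~\ref{thm:pc}, yields an inverse-closed set $X$ meeting each left $H$-coset of $\Omega$ exactly once, hence a left transversal of $A$ with $XH=\Omega$; your inverse-closedness argument showing $X$ also meets each right $H$-coset of $\Omega$ exactly once then gives $HX=\Omega$, so $XH=HX$. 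Your (b)$\Leftrightarrow$(c) is routine double-coset bookkeeping and is essentially the same computation the paper performs in its (c)$\Rightarrow$(a) and (a)$\Rightarrow$(b) steps. Your route buys brevity and transparency (no graph or matching machinery, and the hypothesis on $H$ enters exactly once, in the cleaning step), while the paper's route buys an explicit local construction of the transversals $Y$ in (c), which is the form it exploits elsewhere.

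One small point to tighten in (b)$\Rightarrow$(c): the equality $HY_gH=Y_gH$ is not by itself ``exactly'' the statement that $Y_g$ is a left transversal of $H$ in $HY_gH$; you also need that distinct elements of $Y_g$ lie in distinct left $H$-cosets. This is immediate because $Y_g$ is a left transversal of $A$ in $A\{g,g^{-1}\}A$ and $H\leq A$, so it is a one-line addition rather than a gap.
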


For the special case when $H=1$, the assumption that $H$ is a perfect code of $G$ is trivially satisfied. Therefore, taking $H=1$, the above theorem gives the following characterization of a subgroup $A$ of $G$ to be a perfect code of $G$, including an additional sufficient and necessary condition (Corollary~\ref{cor:new}\eqref{enu:newpc3}) that is not found in the existing literature.

\begin{corollary}\label{cor:new}
    Let $G$ be a group and $A\leq G$. Then the following are equivalent:
    \begin{enumerate}[\rm(a)]
        \item \label{enu:newpc1} $A$ is a perfect code of $G$;
        \item \label{enu:newpc2} there exists an inverse-closed left transversal of $A$ in $G$;
        \item \label{enu:newpc3} for each $g\in G$,  there is an inverse-closed left transversal $Y$ of $A$ in $A\{g,g^{-1}\}A$.
    \end{enumerate}
\end{corollary}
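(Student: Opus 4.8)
The displayed statement, Corollary~\ref{cor:new}, is the special case $H = 1$ of Theorem~\ref{thm:pair}, so one route is to deduce it from that theorem. When $H = 1$ the hypothesis that $H$ is a perfect code of $G$ is vacuous (the trivial subgroup is a perfect code of $G$, witnessed by the complete Cayley graph $\Cay(G, G \setminus \{1\})$); statement~\eqref{enu:newpc1} coincides with part~(a) of Theorem~\ref{thm:pair} since being a perfect code of $(G,1)$ is the same as being a perfect code of $G$; the condition ``$XH = HX$'' of part~(b) of Theorem~\ref{thm:pair} is automatic, giving~\eqref{enu:newpc2}; and for any set $Y$ the condition ``$Y$ is a left transversal of $1$ in $1 \cdot Y \cdot 1 = Y$'' holds trivially, so part~(c) of Theorem~\ref{thm:pair} reduces to~\eqref{enu:newpc3}. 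Thus all three transfer, and in this route there is nothing further to prove.

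I would also want a proof independent of Theorem~\ref{thm:pair}. Here the equivalence \eqref{enu:newpc1}$\Leftrightarrow$\eqref{enu:newpc2} is exactly \eqref{enu:pc1}$\Leftrightarrow$\eqref{enu:pc2} of Theorem~\ref{thm:1} and may be quoted, so I would only prove \eqref{enu:newpc2}$\Leftrightarrow$\eqref{enu:newpc3}. The structural point I would use is that, for each $g \in G$, the set $D_g := A\{g,g^{-1}\}A = AgA \cup Ag^{-1}A$ is inverse-closed and is a union of left cosets of $A$, and that the distinct $D_g$ partition $G$ --- this is because the double cosets $AgA$ partition $G$ and inversion acts on them as an involution $AgA \leftrightarrow Ag^{-1}A$ whose orbits are exactly the sets $D_g$. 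For \eqref{enu:newpc2}$\Rightarrow$\eqref{enu:newpc3}: if $X$ is an inverse-closed left transversal of $A$ in $G$, then for each $g$ the set $X \cap D_g$ is a left transversal of $A$ in $D_g$ (one representative per left coset of $A$ inside $D_g$) and, being the intersection of two inverse-closed sets, is inverse-closed. For \eqref{enu:newpc3}$\Rightarrow$\eqref{enu:newpc2}: choose one $D_g$ per orbit together with the inverse-closed local transversal $Y_{D_g}$ provided by~\eqref{enu:newpc3}, and set $X := \bigcup_g Y_{D_g}$; since the chosen $D_g$ partition $G$ into unions of left cosets of $A$, this $X$ contains exactly one element of each left coset of $A$ in $G$, and it is inverse-closed as a union of inverse-closed sets.

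I do not expect a serious obstacle in Corollary~\ref{cor:new} itself: in the second argument the only point needing care is confirming that the $D_g$ genuinely partition $G$, and the rest is a short coset computation. The real difficulty sits in Theorem~\ref{thm:pair}, where $H \neq 1$ forces one to produce, for each ``self-inverse'' $(H,H)$-double coset occurring in the relevant bi-$H$-invariant set, an inverse-closed left transversal of $H$ confined to prescribed left cosets of $H$ --- essentially an involution in a prescribed coset --- and that is precisely the service performed by the assumption that $H$ is a perfect code of $G$, through the inverse-closed left transversal of $H$ in $G$ guaranteed by Theorem~\ref{thm:1}.
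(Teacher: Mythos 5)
Your first route is exactly what the paper does: Corollary~\ref{cor:new} is obtained by specializing Theorem~\ref{thm:pair} to $H=1$, where the hypothesis that $H$ is a perfect code of $G$ holds trivially and conditions (b) and (c) of the theorem collapse to \eqref{enu:newpc2} and \eqref{enu:newpc3}; the paper offers no separate argument beyond this specialization, so on that route there is indeed nothing further to prove. Your second, independent argument is also correct and is a genuine alternative not present in the paper: quoting \eqref{enu:pc1}$\Leftrightarrow$\eqref{enu:pc2} of Theorem~\ref{thm:1} and then proving \eqref{enu:newpc2}$\Leftrightarrow$\eqref{enu:newpc3} directly via the partition of $G$ into the inverse-closed blocks $A\{g,g^{-1}\}A$ (restricting a global inverse-closed transversal to each block, respectively gluing local ones) gives the $H=1$ statement without any of the machinery of Theorem~\ref{thm:pair} --- no coset graph on $G/H$, no matching argument, no appeal to $H$ being a perfect code. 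What the paper's route buys is economy and uniformity (the corollary is literally a special case of the pair result it has already proved); what your elementary route buys is a self-contained proof of the new equivalence \eqref{enu:newpc1}$\Leftrightarrow$\eqref{enu:newpc3} that isolates where the real difficulty lies, namely in the $H\neq 1$ case, exactly as you observe in your closing paragraph.
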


Furthermore, using Theorem~\ref{thm:pair}, we deduce the following result, which provides conditions under which $A$ is not a perfect code of $(G,H)$. As standard notation in group theory, for a subgroup $H$ of a group $G$, the normal closure of $H$ in $G$ (the smallest normal subgroup of $G$ containing $H$) is denoted by $H^G$, and the normalizer of $H$ in $G$ is denoted by $\mathrm{N}_G(H)$.

\begin{proposition}\label{prop:closure}
    Let $G$ be a group and $H\leq A\leq G$ such that $H$ is nonnormal in $G$. Suppose that $H$ is a perfect code of $G$. If $H^G\leq A\leq \mathrm{N}_{G}(H)$, then $A$ is not a perfect code of $(G,H)$.
\end{proposition}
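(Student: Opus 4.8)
The plan is to argue by contradiction: assuming $A$ is a perfect code of $(G,H)$, I will use Theorem~\ref{thm:pair} to extract a structured transversal and then show it forces $H\trianglelefteq G$, contradicting the hypothesis that $H$ is nonnormal.

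First I would invoke Theorem~\ref{thm:pair}, which applies because $H$ is a perfect code of $G$. If $A$ were a perfect code of $(G,H)$, then by part~(b) of that theorem there is an inverse-closed left transversal $X$ of $A$ in $G$ with $XH=HX$; in fact the argument will only use that $X$ is a left transversal of $A$ in $G$ and that $XH=HX$. The key step is then the observation that the hypothesis $H^G\leq A$ controls the right cosets $Hx$: for each $x\in X$ the conjugate $x^{-1}Hx$ lies in $H^G$, hence in $A$, so $Hx=x(x^{-1}Hx)\subseteq xA$. Consequently the right cosets $Hx$ (for $x\in X$) sit inside pairwise distinct left cosets of $A$, and are in particular pairwise disjoint.

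Next, fixing $x\in X$, I would combine this with $xH\subseteq XH=HX=\bigcup_{x'\in X}Hx'$. Since $xH\subseteq xA$ while, for $x'\neq x$, the coset $Hx'$ is contained in $x'A\neq xA$, the only possibility is $xH\subseteq Hx$; multiplying on the left by $x^{-1}$ gives $H\subseteq x^{-1}Hx$, and comparing orders yields $x^{-1}Hx=H$, i.e.\ $x\in\mathrm{N}_G(H)$. As this holds for every $x\in X$, and $A\leq\mathrm{N}_G(H)$ by hypothesis, we get $G=XA\subseteq\mathrm{N}_G(H)$, so $H\trianglelefteq G$ — contradicting the assumption that $H$ is nonnormal in $G$. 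Hence $A$ is not a perfect code of $(G,H)$.

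I expect the only nontrivial point to be the observation $Hx\subseteq xA$ for $x\in X$, which is precisely where the hypothesis $H^G\leq A$ (rather than merely $H\trianglelefteq A$) is needed; the remaining steps are routine coset manipulations, with $A\leq\mathrm{N}_G(H)$ entering only at the very last line.
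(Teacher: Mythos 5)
Your proof is correct and takes essentially the same route as the paper's: both argue by contradiction via Theorem~\ref{thm:pair}, with the key step being that $H^G\leq A$ forces $Hx\subseteq xA$ for every transversal element $x$, which combined with $xH\subseteq HX$ and the disjointness of the left cosets of $A$ pins $xH$ inside $Hx$. The only cosmetic difference is that you use $A\leq\mathrm{N}_G(H)$ at the very end (to conclude $G=XA\leq\mathrm{N}_G(H)$, so $H\trianglelefteq G$), whereas the paper uses it at the outset to pick a witness $x\in X$, $h\in H$ with $xhx^{-1}\notin H$ and contradicts at the element level.
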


It is worth noting that Proposition~\ref{prop:closure} is the first result other than the contrapositive of Theorem~\ref{thm:1.2} to establish that some subgroup is not a perfect code of a pair $(G,H)$. We then construct three infinite families of counterexamples (Constructions~\ref{const:1},~\ref{const:2} and \ref{const:3}) to answer Question~\ref{ques:1} in the negative, where Proposition~\ref{prop:closure} is utilized (see the proofs of Propositions~\ref{prop:counterexample2}\eqref{enu:c2b} and~\ref{prop:counterexample3}\eqref{enu:c3b}) to validate that the families of triples $(G,A,H)$ in Constructions~\ref{const:2} and~\ref{const:3} are indeed counterexamples. The counterexamples in Construction~\ref{const:3} also illustrate that \cite[Theorem 3.6]{ZZ2021} cannot be generalized to group pairs.

In~\cite{WZ2023}, to provide examples after introducing the new concept of perfect codes of the pair $(G,H)$, it is proved~\cite[Proposition~5.6]{WZ2023} that $S_{n-1}$ is a perfect code of $(S_n,S_3)$ for each $n\geq 5$. In the following proposition, we extend this result significantly, thereby greatly expanding the range of known examples.

\begin{proposition}\label{prop:3}
    For each positive integers $\ell<m<n$, the group $S_m$ is a perfect code of $(S_n,S_\ell)$, where $S_\ell<S_m<S_n$ in the natural embedding.
\end{proposition}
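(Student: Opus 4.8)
The plan is to apply Theorem~\ref{thm:pc}: it suffices to exhibit a left transversal $X$ of $S_m$ in $S_n$ with $XS_\ell=S_\ell X^{-1}$, and I will in fact build an inverse-closed $X$ satisfying $XS_\ell=S_\ell X$, which is stronger. Write $S_j=\Sym(\{1,\dots,j\})$ for $1\le j\le n$, so that $S_\ell<S_m<S_n$ is the given chain, and for each $k$ with $m+1\le k\le n$ put
\[
X_k=\{e\}\cup\{(i\,k)\mid 1\le i\le k-1\}\subseteq S_k ,
\]
a ``star'' at the point $k$. Each $X_k$ is an inverse-closed left transversal of $S_{k-1}$ in $S_k$, since the left cosets of $S_{k-1}$ in $S_k$ are distinguished by the image of $k$. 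The candidate transversal is the product of subsets $X:=X_nX_{n-1}\cdots X_{m+1}$ computed inside $S_n$.

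The first step is to check that $X$ is a left transversal of $S_m$ in $S_n$; this is the standard fact that a product of left transversals along a subgroup chain $S_m\le S_{m+1}\le\dots\le S_n$ is again a left transversal, proved by an easy induction using that $X_k$ is a left transversal of $S_{k-1}$ in $S_k$ and $S_m\le S_{k-1}$ for each $k\ge m+1$.

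The crux of the argument — really the whole idea — is that the star $X_k$ is \emph{setwise invariant under conjugation by $S_{k-1}$}: for $h\in S_{k-1}$ one has $heh^{-1}=e$ and $h(i\,k)h^{-1}=(h(i)\,k)$, because $h$ fixes $k$ and permutes $\{1,\dots,k-1\}$; hence $hX_kh^{-1}=X_k$, i.e.\ $hX_k=X_kh$, and therefore $YX_k=X_kY$ for every subset $Y\subseteq S_{k-1}$. Two consequences follow. First, $X$ is inverse-closed: by induction on $k$, the set $Y_k:=X_kX_{k-1}\cdots X_{m+1}$ lies in $S_k$ and is inverse-closed, because $Y_k=X_kY_{k-1}$ gives $Y_k^{-1}=Y_{k-1}^{-1}X_k^{-1}=Y_{k-1}X_k=X_kY_{k-1}=Y_k$, using the inverse-closedness of $X_k$ and of $Y_{k-1}$ together with $Y_{k-1}\subseteq S_{k-1}$. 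Second, since $S_\ell\le S_m\le S_{k-1}$ for every $k\ge m+1$, I can push $S_\ell$ leftward through the product one factor at a time: $XS_\ell=X_n\cdots X_{m+1}S_\ell=X_n\cdots X_{m+2}S_\ell X_{m+1}=\dots=S_\ell X_n\cdots X_{m+1}=S_\ell X$.

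Putting these together, $X$ is a left transversal of $S_m$ in $S_n$ with $XS_\ell=S_\ell X=S_\ell X^{-1}$, so Theorem~\ref{thm:pc} yields that $S_m$ is a perfect code of $(S_n,S_\ell)$. I do not anticipate a genuine obstacle here: once the star transversals $X_k$ are chosen and their $S_{k-1}$-conjugation-invariance is observed, both required properties of $X$ telescope out of the product, and no analysis of $S_m$-double cosets or of squares of elements is needed. The points to be careful about are merely the bookkeeping in the two inductions (in particular keeping track that $Y_{k-1}\subseteq S_{k-1}$ so that the invariance applies) and the degenerate case $m=n-1$, where the product defining $X$ has a single factor $X_n$.
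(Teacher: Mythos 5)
Your proof is correct, and it takes a genuinely different route from the paper. The paper parametrizes the left cosets of $S_m$ by the set $\Omega$ of injections $[n]\setminus[m]\to[n]$ and builds, for each $\sigma\in\Omega$, an explicit representative $x(\sigma)$ by chasing preimages of points of $[m]$; it then verifies $XS_\ell\subseteq S_\ell X^{-1}$ by a case analysis on cycle decompositions, exhibiting for each $h\in S_\ell$ an element $y(\sigma)\in X$ with $x(\sigma)hy(\sigma)\in S_\ell$. You instead work along the chain $S_m<S_{m+1}<\cdots<S_n$, take the star transversal $X_k=\{e\}\cup\{(i\,k)\mid i<k\}$ of $S_{k-1}$ in $S_k$, and exploit the single key fact that $X_k$ is setwise invariant under conjugation by $S_{k-1}$: the product $X=X_n\cdots X_{m+1}$ is then a left transversal of $S_m$ by the standard telescoping of transversals along a subgroup chain, is inverse-closed by your induction on $Y_k=X_kY_{k-1}$, and satisfies $XS_\ell=S_\ell X$ by pushing $S_\ell$ through the factors; since $X=X^{-1}$, this is exactly the condition $XS_\ell=S_\ell X^{-1}$ of Theorem~\ref{thm:pc}. (A minor convention point: with the paper's right-action notation, left cosets of $S_{k-1}$ are distinguished by the \emph{preimage} of $k$ rather than its image, but as your representatives are involutions this changes nothing.) Your argument is shorter and more structural, and it actually delivers more: an inverse-closed transversal with $XS_\ell=S_\ell X$ (indeed $X$ is normalized by all of $S_m$), i.e.\ condition (b) of Theorem~\ref{thm:pair}, obtained without any appeal to $S_\ell$ being a perfect code of $S_n$, and with one transversal serving all $\ell\leq m$ simultaneously. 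What the paper's construction buys in exchange is an explicit combinatorial description of the coset representatives (each cycle of $x(\sigma)$ meets $[m]$ at most once), which it needs in order to write down $y(\sigma)$ concretely; as a proof of Proposition~\ref{prop:3} alone, your route is cleaner.
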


As a special case when $\ell=1$, we obtain that $S_m$ is perfect code of $S_n$ for each $1\leq m<n$. This result motivates us to further investigate perfect codes of $S_n$. Clearly, studying all subgroups of $S_n$ is not practical, as every finite group is a subgroup of some symmetric group. Nevertheless, maximal subgroups of $S_n$ form a broad and well-studied family of subgroups of $S_n$ (see~\cite{LPS1987} for example), and in coding theory, larger subgroup perfect codes correspond to sparser underlying Cayley graphs, making errors less likely to occur. Therefore, we focus on determining which maximal subgroups of $S_n$ are perfect codes of $S_n$. Our computational results by GAP \cite{GAP4} show that every maximal subgroup of $S_n$, where $n\leq 9$, is a perfect code of $S_n$. This motivates us to ask the question below.

\begin{question}\label{prob:2}
    Does there exist any maximal subgroup $A$ of $S_n$ such that $A$ is not a perfect code of $S_n$?
\end{question}

We answer the question for the case when $A$ is intransitive (see Proposition~\ref{prop:intransitive}) and establish a partial result on affine groups $A$ (see Proposition~\ref{prop:affine}). These results, along with our computational experiments, suggest that the answer to Question~\ref{prob:2} may be negative, at least for most cases. However, further investigation is required to determine whether exceptions exist and to fully characterize the conditions, if any, under which maximal subgroups of $\Sy_n$ fail to be perfect codes.

The rest of this paper is organized as follows. In Section~\ref{sec:2}, we prove Theorem~\ref{thm:pair} and finally end the section by proving Proposition~\ref{prop:closure}. In the next section, we construct three infinite families of counterexamples to answer Question~\ref{ques:1} negatively. In Section~\ref{sec:4}, we prove Proposition~\ref{prop:3} and present our results on Question~\ref{prob:2}.

\section{Characterization of subgroup perfect codes}\label{sec:2}

In this section, we prove Theorem~\ref{thm:pair} and Proposition~\ref{prop:closure}. For Theorem~\ref{thm:pair}, we first prove the equivalence between~\eqref{enu:newpc1} and~\eqref{enu:newpc3}, and then prove the equivalence between~\eqref{enu:newpc1} and~\eqref{enu:newpc2}.

\begin{proof}[Proof of Theorem~$\ref{thm:pair}$]
    \eqref{enu:newpc3}$\Rightarrow$\eqref{enu:newpc1}:
    Assume that for each $g\in G$ there is an inverse-closed left transversal $Y$ of $A$ in $A\{g,g^{-1}\}A$ such that $Y$ is a left transversal of $H$ in $HYH$. Note that $G=\bigsqcup _{i=1}^m A\{g_i,g_i^{-1}\}A$ for some $g_1,\ldots, g_m\in G$. Then for each $i\in\{1,\ldots,m\}$, there is an inverse-closed left transversal $Y_i$ of $A$ in $A\{g_i,g_i^{-1}\}A$ such that $Y_i$ is a left transversal of $H$ in $HY_iH$. Let $X=\bigsqcup_{i=1}^m Y_i$. Clearly, $X$ is an inverse-closed left transversal of $A$ in $G$. By Theorem~\ref{thm:pc}, we are left to prove $XH=HX^{-1}$. Take an arbitrary $hx^{-1}\in HX^{-1}$, where $h\in H$ and $x\in X$, and assume that $x\in Y_i$ for some $i\in\{1,\ldots, m\}$. Then $hx^{-1}\in HY_i^{-1}H=HY_iH$. Since $Y_i$ is a left transversal of $H$ in $HY_iH$, there exists $y\in Y_i$ such that $hx^{-1}\in yH\subseteq XH$. Therefore, $HX^{-1}\subseteq XH$. This together with $|HX^{-1}|=|(XH)^{-1}|=|XH|$ implies that $HX^{-1}=XH$, completing the proof of \eqref{enu:newpc3}$\Rightarrow$\eqref{enu:newpc1}.

    \eqref{enu:newpc1}$\Rightarrow$\eqref{enu:newpc3}: Assume that $A$ is a perfect code of $(G,H)$.  It follows from Theorem~\ref{thm:pc} that there exists a left transversal $T$ of $A$ in $G$ such that $TH=HT^{-1}$. Take an arbitrary $g\in G$. Since $A\{g,g^{-1}\}A$ is a union of disjoint left cosets of $A$, there exists a unique subset $X\subseteq T$ such that $A\{g,g^{-1}\}A=XA$. Noting that $XH=XA\cap TH$ and $(XA)^{-1}=(A\{g,g^{-1}\}A)^{-1}=A\{g,g^{-1}\}A=XA$, we have
    \[
    HX^{-1}=(XH)^{-1}=(XA\cap TH)^{-1}=(XA)^{-1}\cap HT^{-1}=XA\cap TH=XH.
    \]
    As a consequence,
    \begin{equation}\label{eq:XH=HXH}
        HXH=H(HX^{-1})=HX^{-1}=XH=(XH)H=HX^{-1}H.
    \end{equation}

    Let $\Gamma=(V,E)$ be the graph such that $V$ is the set of left cosets of $H$ in $G$, and $\{xH, yH\}\in E$ if and only if $y\in Hx^{-1}H$. In other words, two distinct vertices $xH$ and $yH$ are adjacent if and only if $HyH=Hx^{-1}H$. In particular, $xH$ is adjacent to $x^{-1}H$. Therefore, the subgraph $\Gamma_x$ induced by $H\{x,x^{-1}\}H$ is a connected component of $\Gamma$. Moreover, $\Gamma_x$ is a complete graph of order $|H|/|H\cap H^x|$ if $HxH= Hx^{-1}H$, and is a complete bipartite graph if $HxH\neq Hx^{-1}H$. Denote by $\Gamma_{X}$ the subgraph induced by $HXH$. By~\eqref{eq:XH=HXH}, there exist $x_1,\ldots,x_k\in X$ such that $HXH=\bigsqcup_{i=1}^k H\{x_i,x_i^{-1}\}H$. Hence, the connected components of $\Gamma_X$ are $\Gamma_{x_1},\ldots, \Gamma_{x_k}$. Let
    \[
        I=\{i\in \{1,\ldots, k\}\mid Hx_iH= Hx_i^{-1}H\; \text{and}\; |H|/|H\cap H^{x_i}|\; \text{is odd}\}.
    \]
    Denote by $\Gamma_0$ the subgraph of $\Gamma_X$ induced by the vertex set $\bigsqcup_{i\in \{1,\ldots,k\}\setminus I} H\{x_i,x_i^{-1}\}H$.
    Then each connected component of $\Gamma_0$ is either a complete bipartite graph or a complete graph of even order. Therefore, $\Gamma_0$ has a matching, say, $\{u_1H, v_1H\},\ldots, \{u_\ell H, v_\ell H\}$. According to the adjacency in $\Gamma$, for each $j\in\{1,\ldots,\ell\}$, there exist $a_j,b_j\in H$ such that $v_j=a_ju_j^{-1}b_j$. Let $z_j=v_jb_j^{-1}$. Then $z_jH=v_jH$ and $z_j^{-1}H=b_jv_j^{-1}H=u_ja_j^{-1}H=u_jH$. Hence,
    \begin{equation}\label{eq:vertices}
        \{z_1H,z_1^{-1}H,\ldots,z_\ell H,z_\ell^{-1} H\}=\{u_1H,v_1H,\ldots,u_\ell H,v_\ell H\}=\bigsqcup_{i\in \{1,\ldots,k\}\setminus I} H\{x_i,x_i^{-1}\}H.
    \end{equation}
    Since $H$ is a perfect code of $G$, it follows from Theorem~\ref{thm:1} that, for each $i \in I$, there exists $y_i\in x_iH$ with $y_i^2=e$. Let
    \[
        Y=\{z_1,z_1^{-1},\ldots,z_\ell,z_\ell^{-1}\}\cup \{y_i\mid i\in I\}.
    \]
    Then the union of all the vertices of $\Gamma_X$ is
    \[
        HXH=\Big(\bigsqcup_{i\in \{1,\ldots,k\}\setminus I} H\{x_i,x_i^{-1}\}H\Big)\cup \Big(\bigsqcup_{i\in I} H\{x_i,x_i^{-1}\}H\Big)=YH.
    \]

    To complete the proof of \eqref{enu:newpc1}$\Rightarrow$\eqref{enu:newpc3}, we show that $Y$ is an inverse-closed left transversal of both $H$ in $HYH$ and $A$ in $A\{g,g^{-1}\}A$. Clearly, $Y$ is inverse-closed. Take any two distinct elements $y$ and $z$ in $Y$. Since $yH$ and $zH$ are distinct vertices of $\Gamma_X$, we have $yH\cap zH=\emptyset$. This shows that $Y$ is a left transversal of $H$ in $YH$. From $HXH=YH$ we deduce that $HYH=H(HXH)=HXH=YH$. Hence, $Y$ is a left transversal of $H$ in $HYH$. Moreover, we derive from~\eqref{eq:XH=HXH} that $YH=HXH=XH$, and so there exist distinct $u$ and $v$ in $X$ such that $yH=uH$ and $zH=vH$. Since $X$ is a subset of the left transversal $T$ of $A$ such that $A\{g,g^{-1}\}A=XA$, it follows that $yA\cap zA=uA\cap vA=\emptyset$ and $YA=YHA=XHA=XA=A\{g,g^{-1}\}A$. Therefore, $Y$ is also a left transversal of $A$ in $A\{g,g^{-1}\}A$, as desired.

    \eqref{enu:newpc2}$\Rightarrow$\eqref{enu:newpc1}:
   This is a direct corollary of Theorem~\ref{thm:pc}.

    \eqref{enu:newpc1}$\Rightarrow$\eqref{enu:newpc2}:
    Assume that $A$ is a perfect code of $(G,H)$ and write $G=\bigsqcup _{i=1}^m A\{g_i,g_i^{-1}\}A$ for some $g_1,\ldots, g_m\in G$. By \eqref{enu:newpc1}$\Leftrightarrow$\eqref{enu:newpc3}, for each $i\in\{1,\ldots,m\}$, there exists an inverse-closed left transversal $Y_i$ of $A$ in $A\{g_i,g_i^{-1}\}A$ such that $Y_i$ is a left transversal of $H$ in $HY_iH$. Hence, $HY_i=HY_i^{-1}=(Y_iH)^{-1}=(HY_iH)^{-1}=HY_i^{-1}H=HY_iH=Y_iH$.
        Let $X=\bigsqcup_{i=1}^m Y_i$. Then $X$ is an inverse-closed left transversal of $A$ in $\bigsqcup _{i=1}^m A\{g_i,g_i^{-1}\}A=G$. Moreover, $XH=\bigsqcup _{i=1}^mY_iH=\bigsqcup _{i=1}^mHY_i=HX$. This completes the proof.
\end{proof}

\begin{proof}[Proof of Proposition~$\ref{prop:closure}$]
    Suppose for a contradiction that $A$ is a perfect code of $(G,H)$. By Theorem~\ref{thm:pair}, there exists an inverse-closed left transversal $X$ of $A$ in $G$ such that $XH=HX$. Since $H$ is not normal in $G$ but normal in $A$, there exist $x\in X$ and $h\in H$ such that $xhx^{-1}\notin H$. Since $xH\subseteq XH=HX$, there exist $x_1,x_2\in X$ and $h_1,h_2\in H$ such that $x=h_1x_1$ and $xh=h_2x_2$. Then $h_1x_1A=xA=xhA=h_2x_2A$. Note from $H^G\leq A$ that
    \[
        h_1x_1A=x_1h_1^{x_1}A=x_1A \ \text{ and }\ h_2x_2A=x_2h_2^{x_2}A=x_2A.
    \]
    Thereby we obtain $x_1A=x_2A$, which implies $x_1=x_2$. Hence, $xhx^{-1}=h_2x_2(h_1x_1)^{-1}=h_2h_1^{-1}$, contradicting  $xhx^{-1}\notin H$.
\end{proof}

\section{Counterexamples to Question~\ref{ques:1}}\label{sec:3}

In this section, we construct three infinite families of triples $(G,A,H)$ with $H\leq A\leq G$, and show that they are counterexamples of the converse of Theorem~\ref{thm:1.2}, which leads to a negative answer to Question~\ref{ques:1}.
Recall that the statement of Theorem~\ref{thm:1.2} is the implication of $A$ being a perfect code of $(G,H)$ to one of the following for each $g\in G$:
 \begin{enumerate}[\rm(i)]
        \item \label{enu:cond1} the left coset $gA$ contains an element $x$ such that $x^2\in H^y$ for some $y\in A$.
        \item \label{enu:cond2} $|A\{g,g^{-1}\}A|/|A|$ is even;
    \end{enumerate}

\begin{construction}\label{const:1}
    Let $G=D_{8n}=\langle a,b\mid a^{4n}=b^2=1,\; bab=a^{-1}\rangle$ for some positive integer $n$, and take $A=\langle a^{2n}, b\rangle\cong C_2\times C_2$ and $H=\langle b\rangle\cong C_2$.
\end{construction}

As asserted by the proposition below, the triples $(G,A,H)$ in Construction~\ref{const:1} satisfy~\eqref{enu:cond1} for each $g\in G$ but not that $A$ is a perfect code of $(G,H)$.

\begin{proposition}\label{prop:counterexample1}
    For each triple $(G,A,H)$ in Construction~$\ref{const:1}$, the following statements hold:
    \begin{enumerate}[\rm(a)]
        \item \label{enu:c1a} for each $g\in G$, the left coset $gA$ contains an element $x$ such that $x^2\in H^y$ for some $y\in A$;
        \item \label{enu:c1b} $A$ is not a perfect code of $(G,H)$.
    \end{enumerate}
\end{proposition}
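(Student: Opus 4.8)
The plan is to handle the two parts separately: part~(a) by a short direct verification, part~(b) by invoking Theorem~\ref{thm:pair}.

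For (a), I would first note that $A\cong C_2\times C_2$ is abelian, so $H=\langle b\rangle$ is normal in $A$ and hence $H^y=H$ for every $y\in A$; the condition on a coset $gA$ therefore reduces to requiring that $gA$ contain an element whose square lies in $H=\{1,b\}$. Writing an arbitrary $g\in G$ as $a^j$ or $a^jb$, I would observe that in either case $a^jb$ lies in $gA=a^jA$ and that $(a^jb)^2=a^j(ba^jb)=a^ja^{-j}=1\in H$. Thus every coset of $A$ meets the set of involutions $\{a^jb:0\le j<2n\}$, which settles (a). (That set is in fact an inverse-closed left transversal of $A$ in $G$, so (a) is an instance of the phenomenon behind Corollary~\ref{cor:new}.)

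For (b), the plan is to apply Theorem~\ref{thm:pair}, which first requires checking that $H=\langle b\rangle$ is a perfect code of $G=D_{8n}$. I would verify this through condition~(c) of Theorem~\ref{thm:1} applied with $H$ in the role of $A$: a direct computation gives $\mathrm{N}_G(H)=A$ (both $a^i$ and $a^ib$ normalise $\langle b\rangle$ precisely when $a^{2i}=1$, i.e.\ when $2n\mid i$), so the only $x\in G$ with $x^2\in H$ and $|H|/|H\cap H^x|$ odd lie in $A$, and each such $x$ satisfies $x^2=e$, so one may take $y=x$. Then, by the equivalence of~(a) and~(c) in Theorem~\ref{thm:pair}, it suffices to exhibit a single $g\in G$ for which no inverse-closed left transversal $Y$ of $A$ in $A\{g,g^{-1}\}A$ is simultaneously a left transversal of $H$ in $HYH$. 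I would take $g=a^n$: using $a^{-2n}=a^{2n}\in A$ one computes $Aa^nA=a^nA=Aa^{-n}A$, so the double coset $A\{a^n,a^{-n}\}A$ is the single left coset $a^nA$ of $A$. Hence any left transversal $Y$ of $A$ in this double coset is a singleton $\{y\}$ with $y\in a^nA$, and for $\{y\}$ to be a left transversal of $H$ in $HyH$ one would need $|HyH|=|H|$, i.e.\ $y\in\mathrm{N}_G(H)=A$; but $a^n\notin A$ forces $a^nA\cap A=\emptyset$, so $y\notin A$, a contradiction. Theorem~\ref{thm:pair} then yields that $A$ is not a perfect code of $(G,H)$.

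I expect no serious obstacle here, as everything is elementary dihedral arithmetic. The two steps needing a little care are the identification $\mathrm{N}_G(H)=A$, which is used both to verify the hypothesis of Theorem~\ref{thm:pair} and to obtain the final contradiction, and the observation that $A\{a^n,a^{-n}\}A$ collapses to a single coset of $A$; the choice $g=a^n$ is exactly what makes condition~(c) of Theorem~\ref{thm:pair} impossible to meet. As an alternative for (b), one could use condition~(b) of Theorem~\ref{thm:pair}: given an inverse-closed left transversal $X$ of $A$ in $G$ with $XH=HX$, the representative $x\in X$ of the coset $a^nA$ lies in $\mathrm{N}_G(A)$ (because $a^n\in\mathrm{N}_G(A)$) but not in $\mathrm{N}_G(H)=A$, whereupon the coset-chasing of the proof of Proposition~\ref{prop:closure} forces $xhx^{-1}\in H$ for every $h\in H$, a contradiction.
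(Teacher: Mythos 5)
Your proof is correct. Part~(a) is essentially the paper's argument (every coset $a^jA$ contains the involution $a^jb$), but for part~(b) you take a genuinely different route. The paper argues directly from Theorem~\ref{thm:pc}: it parametrizes an arbitrary left transversal $X=\{x_0,ax_1,\ldots,a^{2n-1}x_{2n-1}\}$ of $A$, computes $XH$ and $HX^{-1}$ explicitly, and derives the contradiction $a^{2n}=1$ by intersecting with the coset $a^nA$. You instead invoke the paper's main Theorem~\ref{thm:pair}, which requires first checking its hypothesis that $H$ is a perfect code of $G$ (your verification via $\mathrm{N}_G(H)=A$ and Theorem~\ref{thm:1}(c) is correct, since $|H|/|H\cap H^x|$ odd forces $x\in\mathrm{N}_G(H)=A$ and every element of $A\cong C_2\times C_2$ is an involution), and then show that condition~(c) of Theorem~\ref{thm:pair} fails at $g=a^n$: since $a^n\in\mathrm{N}_G(A)$ and $a^{2n}\in A$, the set $A\{a^n,a^{-n}\}A$ is the single coset $a^nA$, so any transversal $Y$ is a singleton $\{y\}$, and $\{y\}$ being a transversal of $H$ in $HyH$ forces $y\in\mathrm{N}_G(H)=A$, which is incompatible with $y\in a^nA$. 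Both proofs localize the obstruction at the coset $a^nA$; yours makes the structural reason transparent ($a^n$ normalizes $A$ but not $H$, so the double coset collapses while $HyH$ does not), and it is in effect a localized variant of the argument behind Proposition~\ref{prop:closure}, which cannot be applied verbatim here because $H^G=\langle a^2,b\rangle\not\leq A$. What the paper's computation buys in exchange is independence from Theorem~\ref{thm:pair} (and from the need to verify that $H$ is a perfect code of $G$), resting only on the external criterion Theorem~\ref{thm:pc}. Your closing sketch via condition~(b) of Theorem~\ref{thm:pair} also goes through, since $Ha^nA=a^nA$ makes the coset-chasing close up at $x_2=x$, though as written it is only a sketch; the argument via condition~(c) is the one that stands complete.
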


\begin{proof}
Clearly, $A, aA,\ldots, a^{2n-1}A$ are all the left cosets of $A$ in $G$. Since $(a^ib)^2=1\in H$ for each $i\in\{0,\ldots,2n-1\}$, we obtain that each left coset of $A$ in $G$ contains an involution. This completes the proof of statement~\eqref{enu:c1a}.

Let $X$ be an arbitrary left transversal of $A$ in $G$. By Theorem~\ref{thm:pc}, to prove statement~\eqref{enu:c1b}, it suffices to show that $XH\neq HX^{-1}$. Write
\[
X=\{x_0,ax_1,\ldots,a^{2n-1}x_{2n-1}\},
\]
where $x_i\in A$ for $i\in\{0,\ldots,2n-1\}$. Since $A=\langle a^{2n}, b\rangle$ and $H=\langle b\rangle$, we have $x_i\in a^{2n\gamma_i}H$ with $\gamma_i\in \{0,1\}$ for each $i\in\{0,\ldots,2n-1\}$. It follows that
\begin{equation*}
    HX^{-1}=(XH)^{-1}=\bigsqcup_{i=0}^{2n-1}\{a^{-(i+2n\gamma_i)},ba^{-(i+2n\gamma_i)}\}
    =\bigsqcup_{i=0}^{2n-1}\{a^{-(i+2n\gamma_i)},a^{i+2n\gamma_i}b\}.
\end{equation*}
Suppose for a contradiction that $XH=HX^{-1}$. Then
\[
    \bigsqcup_{i=0}^{2n-1}\{a^{i+2n\gamma_i},a^{i+2n\gamma_i}b\}=\bigsqcup_{i=0}^{2n-1}\{a^{-(i+2n\gamma_i)},a^{i+2n\gamma_i}b\},
\]
which implies that
\[
    \big\{a^{i+2n\gamma_i}\mid i\in\{0,\ldots,2n-1\}\big\}=\big\{a^{-(i+2n\gamma_i)}\mid i\in\{0,\ldots,2n-1\}\big\}.
\]
Intersecting both sides with $a^nA$, we obtain
\[
    a^{n+2n\gamma_n}=a^{-(n+2n\gamma_{n})}.
\]
This implies that $a^{2n}=1$, a contradiction. Thus, $XH\neq HX^{-1}$, as desired.
\end{proof}

Before presenting the second and third infinite families of counterexamples, let us fix some notation for the rest of this section. For the finite field $\FF_{q}$ where $q=p^f$ for some prime $p$ and positive integer $f$, let $\FF_{q}^+$ and $\FF_{q}^\times$ denote its additive group and multiplicative group respectively. As $\FF_{q}^\times$ is cyclic, we take a generator $\omega$ of $\FF_{q}^\times$. For $u\in\FF_{q}$, let $R(u)$ be the permutation on $\FF_{q}$ sending $v$ to $v+u$ for each $v\in\FF_{q}$. Then $R$ gives an embedding of the group $\FF_{q}^+$ into $\Sym(\FF_{q})$. Let $e=R(0)$ be the identity in $\Sym(\FF_{q})$, and let $s$ and $\varphi$ be permutations on $\FF_q$ such that
\begin{equation}\label{eq:s}
s\colon v\mapsto v\omega\ \text{ and }\ \varphi\colon v\mapsto v^p
\end{equation}
for each $v\in\bbF_q$. Then $s$ is a Singer cycle and $\varphi$ is the Frobenius automorphism of the field $\FF_q$.
The following lemma will be used repeatedly without reference.

\begin{lemma}\label{lm:3.3}
    In the above notation, the following statements hold:
    \begin{enumerate}[\rm(a)]
        \item \label{enu:a} $s$ and $\varphi$ have orders $q-1$ and $f$, respectively, such that $s^\varphi=s^p$;
        \item \label{enu:b} $R(u)^s=R(u\omega)$ and $R(u)^\varphi=R(u^p)$ for each $u\in\bbF_q$.
    \end{enumerate}
\end{lemma}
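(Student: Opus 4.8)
The plan is to prove both parts of Lemma~\ref{lm:3.3} by direct computation with the given actions on $\FF_q$, using the convention that permutations act on the right (so $v^{gh}=(v^g)^h$) and that $g^h=h^{-1}gh$; this is the convention under which the stated relations come out exactly as written. (These relations are, of course, just the structure constants of $\FF_q^+\rtimes\GaL_1(q)$ acting on $\FF_q$, so nothing subtle is involved.)

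For the order statements in part~(a), I would first observe that $s^k$ sends $v$ to $v\omega^k$ and $\varphi^k$ sends $v$ to $v^{p^k}$ for every nonnegative integer $k$. Thus $s^k=e$ exactly when $\omega^k=1$, and since $\omega$ generates the cyclic group $\FF_q^\times$ of order $q-1$ this forces $(q-1)\mid k$, so $|s|=q-1$. Similarly $\varphi^k=e$ exactly when $v^{p^k}=v$ for all $v\in\FF_q$, i.e.\ $\FF_q\subseteq\FF_{p^k}$, equivalently $f\mid k$, so $|\varphi|=f$. For the relation $s^\varphi=s^p$, I would trace a general $v\in\FF_q$ through $\varphi^{-1}$ (which sends $v$ to $v^{p^{f-1}}$), then $s$, then $\varphi$: this yields $(v^{p^{f-1}}\omega)^p=v^{p^f}\omega^p=v\,\omega^p$, which is precisely the image of $v$ under $s^p$.

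Part~(b) is handled in the same style. For $R(u)^s=s^{-1}R(u)s$, tracing a general $v$ gives $v\mapsto v\omega^{-1}\mapsto v\omega^{-1}+u\mapsto(v\omega^{-1}+u)\omega=v+u\omega$, which is $v^{R(u\omega)}$. For $R(u)^\varphi=\varphi^{-1}R(u)\varphi$, tracing $v$ gives $v\mapsto v^{p^{f-1}}\mapsto v^{p^{f-1}}+u\mapsto(v^{p^{f-1}}+u)^p$; at this point one invokes the additivity of the Frobenius map (the identity $(a+b)^p=a^p+b^p$ in characteristic $p$) to rewrite this as $v^{p^f}+u^p=v+u^p$, which is $v^{R(u^p)}$.

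There is no real obstacle here: the argument is a short sequence of substitutions. The only points that need any care are keeping the composition and conjugation convention consistent throughout, and citing the characteristic-$p$ additivity of Frobenius in the last step; with those fixed, the lemma follows immediately.
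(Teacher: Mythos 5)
Your proposal is correct and follows essentially the same route as the paper: direct computation of the conjugates $s^\varphi$, $R(u)^s$, $R(u)^\varphi$ by tracing a general $v\in\bbF_q$ through $\varphi^{-1}$, the middle map, and $\varphi$ (or $s$), using the additivity of Frobenius, with the order statements read off from~\eqref{eq:s}. Your write-up merely spells out the order computations and the explicit form of $\varphi^{-1}$ a bit more than the paper does.
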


\begin{proof}
    The orders of $s$ and $\varphi$ are readily seen from~\eqref{eq:s}. For each $u$ and $v$ in $\FF_{q}$,
\begin{align*}
    v^{s^\varphi}&=v^{\varphi^{-1}s\varphi}=(v^{\varphi^{-1}}\omega)^\varphi
    =v^{\varphi^{-1}\varphi}\omega^\varphi=v\omega^p=v^{s^p},\\
    v^{R(u)^s}&=v^{s^{-1}R(u)s}=(v\omega^{-1}+u)\omega=v+u\omega
    =v^{R(u\omega)},\\
    v^{R(u)^\varphi}&=v^{\varphi^{-1}R(u)\varphi}=(v^{\varphi^{-1}}+u)^\varphi
    =v^{\varphi^{-1}\varphi}+u^\varphi=v^{R(u^p)}.
\end{align*}
This verifies the lemma.
\end{proof}

Our second infinite family of counterexamples turn out to satisfy~\eqref{enu:cond2} for each $g\in G\setminus A$. It is worth noting that, if $g\in A$, then condition~\eqref{enu:cond1} holds trivially, whereas condition~\eqref{enu:cond2} fails to hold.

\begin{construction}\label{const:2}
    Let $f=2d$ for some integer $d\geq2$, let $V=R(\FF_{2^f})\leq \Sym(\FF_{4^d})$, and let $s$ and $\varphi$ be permutations on $\bbF_{4^d}$ defined by~\eqref{eq:s} with $p=2$. Take $G=V\rtimes(\langle s^{2^d-1}\rangle\rtimes\langle\varphi^d\rangle)$, $A=V\rtimes \langle \varphi^d \rangle$ and $H=\langle R(\omega),R(\omega^{2^d})\rangle$.
\end{construction}

Note in Construction~\ref{const:2} that $\varphi^d$ is an involution inverting $s^{2^d-1}$, and so $G\cong C_2^{2d}\rtimes D_{2(2^d+1)}$.
Moreover, $A\cong C_2^{2d}\rtimes C_2$, while $H=\langle R(\omega),R(\omega^{2^d})\rangle\cong C_2\times C_2$ consists of $e$, $R(\omega)$, $R(\omega^{2^d})$ and $R(\omega+\omega^{2^d})$. Since
\begin{equation}\label{eq:0}
   R(\omega)^{\varphi^d}=R(\omega^{2^d}),\ \  R(\omega^{2^d})^{\varphi^d}=R(\omega),\ \   R(\omega+\omega^{2^d})^{\varphi^d}=R(\omega+\omega^{2^d}),
\end{equation}
we see that $H$ is normalized by $\varphi^d$. This in conjunction with the fact that $H$ is a subgroup of the abelian group $V$ indicates that $H$ is normalized by $V\rtimes \langle \varphi^d \rangle=A$; that is,
\begin{equation}\label{eq:normal}
A\leq\mathrm{N}_G(H).
\end{equation}

\begin{proposition}\label{prop:counterexample2}
    For each triple $(G,A,H)$ in Construction~$\ref{const:2}$, the following statements hold:
    \begin{enumerate}[\rm(a)]
        \item \label{enu:c2a} $|A\{g,g^{-1}\}A|/|A|$ is even for each $g\in G\setminus A$;
        \item \label{enu:c2b} $A$ is not a perfect code of $(G,H)$.
    \end{enumerate}
\end{proposition}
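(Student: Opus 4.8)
The two statements call for rather different arguments, so I would prove them separately. Write $t=s^{2^{d}-1}$ and $c=\varphi^{d}$, so that $G=V\langle t\rangle\langle c\rangle$ with $V=R(\FF_{2^{f}})\trianglelefteq G$, $t$ of order $2^{d}+1$, and $c$ an involution inverting $t$; let $\pi\colon G\to\overline G:=G/V\cong D_{2(2^{d}+1)}$ be the natural projection and set $\bar t=\pi(t)$, $\bar c=\pi(c)$. For the first statement, note that since $V\leq A$ every double coset $AgA$ is a union of left cosets of $V$, so $AgA=\pi^{-1}\big(\pi(A)\pi(g)\pi(A)\big)$ and therefore $|AgA|/|A|=|\pi(A)\pi(g)\pi(A)|/|\pi(A)|$. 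Here $\pi(A)=\langle\bar c\rangle$ has order $2$, and $g\in A$ exactly when $\pi(g)\in\langle\bar c\rangle$, since $\pi^{-1}(\langle\bar c\rangle)=V\langle c\rangle=A$. In the dihedral group $\overline G$ the centraliser of the reflection $\bar c$ is just $\langle\bar c\rangle$ because $2^{d}+1$ is odd, so for every $\bar x\notin\langle\bar c\rangle$ the double coset $\langle\bar c\rangle\bar x\langle\bar c\rangle$ has exactly four elements. Hence $|AgA|/|A|=2$ for every $g\in G\setminus A$, and since $A\{g,g^{-1}\}A=AgA\cup Ag^{-1}A$ is either a single such double coset or the disjoint union of two double cosets of equal size, $|A\{g,g^{-1}\}A|/|A|\in\{2,4\}$ is even.

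For the second statement the plan is to apply Proposition~\ref{prop:closure} to $(G,A,H)$, which requires checking that $H$ is nonnormal in $G$, that $H$ is a perfect code of $G$, and that $H^{G}\leq A\leq\mathrm N_{G}(H)$. The last containments are immediate: $H\leq V\trianglelefteq G$ forces $H^{G}\leq V\leq A$, and $A\leq\mathrm N_{G}(H)$ is~\eqref{eq:normal}. For nonnormality, $R(\omega^{2^{d}})^{t}=R(\omega^{2^{d+1}-1})$, and if this element lay in $H=\{e,R(\omega),R(\omega^{2^{d}}),R(\omega+\omega^{2^{d}})\}$ then, setting $\lambda=\omega^{2^{d}-1}$, one of $\lambda=1$, $\lambda^{2}=1$, $\lambda^{2}+\lambda+1=0$ would hold (the last forcing $\lambda^{3}=1$); but $\lambda$ has order $2^{d}+1\geq5$, ruling all three out. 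The substantive point is that $H$ is a perfect code of $G$, which I would verify via Theorem~\ref{thm:1}\eqref{enu:pc3}: it suffices to show that whenever $x\in G$ satisfies $x^{2}\in H$ and $|H|/|H\cap H^{x}|$ is odd, the coset $xH$ contains an element $y$ with $y^{2}=e$. Since $|H|=4$, the oddness condition forces $H^{x}=H$, i.e.\ $x\in\mathrm N_{G}(H)$; and a short computation in $\FF_{4^{d}}$, again exploiting that $\lambda$ has order $2^{d}+1\geq5$ (so $\lambda^{2}\neq1$ and $\lambda^{3}\neq1$), shows $H^{t^{j}}=H$ only when $(2^{d}+1)\mid j$, whence, using also~\eqref{eq:0} and that $V$ is abelian, $\mathrm N_{G}(H)=V\langle c\rangle=A$. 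So we may assume $x\in A$. If $x\in V$ then $x^{2}=e$ by characteristic $2$, and $y=x$ works. Otherwise $x=R(v)c$, so $x^{2}=R(v+v^{2^{d}})$; since $(v+v^{2^{d}})^{2^{d}}=v+v^{2^{d}}$ we have $v+v^{2^{d}}\in\FF_{2^{d}}$, and $H\cap\FF_{2^{d}}=\{0,\omega+\omega^{2^{d}}\}$ because $\omega,\omega^{2^{d}}\notin\FF_{2^{d}}$ whereas $\omega+\omega^{2^{d}}\in\FF_{2^{d}}$; hence $v+v^{2^{d}}\in\{0,\omega+\omega^{2^{d}}\}$. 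If $v+v^{2^{d}}=0$ take $y=x$; if $v+v^{2^{d}}=\omega+\omega^{2^{d}}$ take $y=R(v+\omega)c\in xH$, for which $y^{2}=R\big((v+v^{2^{d}})+(\omega+\omega^{2^{d}})\big)=R(0)=e$. With all hypotheses of Proposition~\ref{prop:closure} verified, $A$ is not a perfect code of $(G,H)$.

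The main obstacle is the step just sketched, namely proving that $H$ is itself a perfect code of $G$ so that Proposition~\ref{prop:closure} applies: this rests on pinning down $\mathrm N_{G}(H)=A$ and on combining the characteristic-$2$ fact that $V$ consists of involutions with the observation that the additive map $z\mapsto z+z^{2^{d}}$ meets $H$ precisely in the line $\FF_{2}(\omega+\omega^{2^{d}})$. The hypothesis $d\geq2$ enters crucially here, and in the nonnormality of $H$, through the arithmetic of the order-$(2^{d}+1)$ element $\lambda=\omega^{2^{d}-1}$, whose order must be neither $1$, nor $2$, nor $3$. The first statement, by contrast, is routine once one passes to the dihedral quotient $G/V$.
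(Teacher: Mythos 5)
Your proposal is correct, and while your overall strategy for part (b) coincides with the paper's --- verify that $H$ is a perfect code of $G$ via Theorem~\ref{thm:1}, check that $H$ is nonnormal and that $H^G\leq A\leq \mathrm{N}_G(H)$, then invoke Proposition~\ref{prop:closure} --- several of your sub-arguments take a genuinely different, and in places cleaner, route. For part (a), the paper computes $A\cap A^g=V$ directly from the relations $s^{\varphi}=s^{2}$ and $R(u)^{s}=R(u\omega)$, whereas you pass to the quotient $G/V\cong D_{2(2^d+1)}$ and use that a reflection in a dihedral group of twice-odd order is self-centralizing; this is shorter, at the modest cost of the observation $|AgA|/|A|=|\pi(A)\pi(g)\pi(A)|/|\pi(A)|$, which is valid since $V\leq A$ and $V\trianglelefteq G$. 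For the perfect-code step in (b), the paper parametrizes $x=R(v)s^{i(2^d-1)}\varphi^{dj}$, uses $x^{2}\in H$ first to force the $\langle s\rangle$-part to be trivial and only then the oddness condition; you instead use the oddness condition first to get $x\in\mathrm{N}_G(H)$ and pin down $\mathrm{N}_G(H)=A$ once and for all, after which the cases $x\in V$ and $x=R(v)\varphi^{d}$ are handled essentially as in the paper (your witness $R(v+\omega)\varphi^{d}=xR(\omega^{2^{d}})$ plays the role of the paper's $xR(\omega)$, and your observation that $v+v^{2^{d}}\in\FF_{2^{d}}$ while $\omega,\omega^{2^{d}}\notin\FF_{2^{d}}$ replaces the paper's appeal to~\eqref{eq:0}). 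Your normalizer computation also subsumes the nonnormality of $H$, which the paper establishes separately and less directly.

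The only step you leave as a sketch is the claim $H^{t^{j}}=H\Rightarrow(2^{d}+1)\mid j$, and it completes exactly as you indicate: if $\lambda^{j}\neq1$ with $\lambda=\omega^{2^{d}-1}$, then multiplication by $\lambda^{j}$ permutes the three nonzero elements $\omega,\omega^{2^{d}},\omega+\omega^{2^{d}}$ without fixed points, hence as a $3$-cycle, and either of the two possible $3$-cycles forces $\lambda^{2}+\lambda+1=0$ and so $\lambda^{3}=1$, contradicting that $\lambda$ has order $2^{d}+1\geq5$. So there is no gap; writing out that short computation makes the argument complete.
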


\begin{proof}
Take an arbitrary $g\in G\setminus A$. Noting that the set of right cosets of $A$ in $G$ consists of $As^{i(2^d-1)}$ with $i\in \{0,1,\ldots,2^d\}$, we have $g\in As^{k(2^d-1)}$ for some $k\in\{1,\ldots,2^d\}$. It follows that $A^g=A^{s^{k(2^d-1)}}$. Suppose that $R(u)\varphi^{di}=(R(v)\varphi^{dj})^{s^{k(2^d-1)}}\in A\cap A^g$ for some $u,v\in\bbF_{4^d}$ and $i,j\in \{0,1\}$. Since $V\trianglelefteq G$ and $V\cap \langle\varphi\rangle=1$, we then derive that $R(u)=R(v)^{s^k}$ and $\varphi^{di}=(\varphi^{dj})^{s^{k(2^d-1)}}$, whence
\begin{align*}
    \varphi^{d(i-j)}&=\varphi^{-dj}s^{-k(2^d-1)}\varphi^{dj}s^{k(2^d-1)}\\
    &=(s^{-k(2^d-1)})^{\varphi^{dj}}s^{k(2^d-1)}=s^{-k(2^d-1)2^{dj}}s^{k(2^d-1)}=s^{k(2^d-1)(-2^{dj}+1)}.
\end{align*}
Then it follows from $\langle \varphi\rangle \cap \langle s\rangle=1$ that $i=j$ and $4^d-1$ divides $k(2^d-1)(2^{dj}-1)$. Thus, $i=j=0$, which implies that $A\cap A^g\leq V$. Clearly, $A\cap A^g\geq V\cap V^g=V$, as $V\trianglelefteq G$. Therefore, we obtain $A\cap A^g=V$ and hence
\[
    \frac{|AgA|}{|A|}=\frac{|A|}{|A\cap A^g|}=\frac{|A|}{|V|}=|\langle \varphi^d\rangle|=2.
\]
Since $|AgA|/|A|$ divides $|A\{g,g^{-1}\}A|/|A|$, we conclude that $|A\{g,g^{-1}\}A|/|A|$ is even, as statement~\eqref{enu:c2a} asserts.

For statement~\eqref{enu:c2b}, in order to apply Proposition~\ref{prop:closure}, we first show that $H$ is a perfect code of $G$. Take an arbitrary $x\in G$ and write $x=R(v)s^{i(2^d-1)}\varphi^{dj}$ for some $v\in \bbF_{4^d}$, $i \in \{0,1,\ldots,2^d\}$ and $j\in\{0,1\}$. Assume that $x^2\in H$ and $|H|/|H\cap H^x|$ is odd. By Theorem~\ref{thm:1}, it suffices to show that there exists $z\in xH$ such that $z^2=e$. For convenience, write
\[
    \ell=i(2^d-1).
\]
Observe that
\begin{align}
    x^2
      =R(v)s^\ell\varphi^{dj}R(v)s^\ell\varphi^{dj}
      &=R(v)R(v)^{(s^\ell\varphi^{dj})^{-1}}s^\ell\varphi^{dj}s^\ell\varphi^{dj}\nonumber\\
      &=R(v)R((v^{2^{dj}}\omega^{-\ell}))s^\ell(s^\ell)^{\varphi^{dj}}
      =R(v+v^{2^{dj}}\omega^{-\ell})s^{\ell(2^{dj}+1)}.\label{eq:x^2}
  \end{align}
We derive from $x^2\in H$ that $s^{\ell(2^{dj}+1)}=e$. If $j=0$, then this implies $\ell=0$, and it follows that $x^2=R(v+v)=e$, as desired. Now we assume $j=1$. Then
\begin{align*}
    H^x
    &=\{e, R(\omega), R(\omega^{2^d}),R(\omega+\omega^{2^d})\}^{R(v)s^\ell\varphi^d}\\
    &=\{e, R(\omega^{1+\ell}), R(\omega^{2^d+\ell}),R(\omega^{1+\ell}+\omega^{2^d+\ell})\}^{\varphi^d}\\
    &=\{e, R(\omega^{(1+\ell)2^d}), R(\omega^{(2^d+\ell)2^d}), R(\omega^{(1+\ell)2^d}+\omega^{(2^d+\ell)2^d})\}.
\end{align*}
Since $|H|=4$ and $|H|/|H\cap H^x|$ is odd, we have $H^x=H$, which means
\begin{equation*}
\{e, R(\omega^{(1+\ell)2^d}), R(\omega^{(2^d+\ell)2^d}), R(\omega^{(1+\ell)2^d}+\omega^{(2^d+\ell)2^d})\}=\{e, R(\omega), R(\omega^{2^d}), R(\omega+\omega^{2^d})\}.
\end{equation*}
The conjugation of $\varphi^d$ on both sides of this equation gives
\[
    \{e, R(\omega^{1+\ell}), R(\omega^{2^d+\ell}), R(\omega^{1+\ell}+\omega^{2^d+\ell})\}=\{e, R(\omega), R(\omega^{2^d}), R(\omega+\omega^{2^d})\},
\]
which implies that
\[
    \{\omega^{1+\ell}, \omega^{2^d+\ell}, \omega^{1+\ell}+\omega^{2^d+\ell}\}=\{\omega, \omega^{2^d}, \omega+\omega^{2^d}\}.
\]
Suppose that $\ell\neq 0$. Then $\omega^{1+\ell}\ne \omega$, $\omega^{2^d+\ell}\ne \omega^{2^d}$ and
$\omega^{1+\ell}+\omega^{2^d+\ell}\ne\omega+\omega^{2^d}$. Accordingly, one of the following equations holds:
\begin{align}
    &(\omega^{1+\ell}, \omega^{2^d+\ell}, \omega^{1+\ell}+\omega^{2^d+\ell})=(\omega^{2^d}, \omega+\omega^{2^d}, \omega),\label{eq:1}\\
    &(\omega^{1+\ell}, \omega^{2^d+\ell}, \omega^{1+\ell}+\omega^{2^d+\ell})=(\omega+\omega^{2^d}, \omega, \omega^{2^d}).\label{eq:2}
\end{align}
For~\eqref{eq:1}, we have $\omega^{2^d+\ell}=\omega+\omega^{2^d}=(\omega+\omega^{2^d})^{2^d}=(\omega^{2^d+\ell})^{2^d}$ and $\omega^{2^d+\ell}=\omega^{1+\ell}\omega^{2^d-1}=\omega^{2^d}\omega^{2^d-1}$, from which it follows that
\[
    1=(\omega^{2\cdot2^d-1})^{2^d}(\omega^{2^d}\omega^{2^d-1})^{-1}=\omega^{2\cdot4^d-2^d+1-2\cdot2^d}=(\omega^{-1})^{3\cdot2^d-3}.
\]
For~\eqref{eq:2}, we have $\omega^{1+\ell}=\omega+\omega^{2^d}=(\omega+\omega^{2^d})^{2^d}=\omega^{(1+\ell)2^d}$ and $\omega^{1+\ell}=\omega^{2^d+\ell}\omega^{1-2^d}=\omega\omega^{1-2^d}$, whence
\[
    1=(\omega^{2-2^d})^{2^d}(\omega\omega^{1-2^d})^{-1}=\omega^{2\cdot2^d-4^d-2+2^d}=\omega^{3\cdot2^d-3}.
\]
Thus, both~\eqref{eq:1} and~\eqref{eq:2} imply that $4^d-1$ divides $3\cdot2^d-3$, which is impossible as $d\geq2$. Consequently, $\ell=0$. Then $x=R(v)\varphi^d$, and this together with $j=1$ leads to $x^2=R(v+v^{2^d})$ by~\eqref{eq:x^2}.
Since $x^2\in H$ and $R(v+v^{2^d})$ is fixed by the conjugation of $\varphi^d$, we conclude from~\eqref{eq:0} that $x^2\in \{e,R(\omega+\omega^{2^d})\}$. Moreover, if $x^2=R(\omega+\omega^{2^d})$, then for the element $z=xR(\omega)\in xH$, we have
\[
    z^2=(xR(\omega))^2=x^2R(\omega)^{x}R(\omega)
    =x^2R(\omega)^{R(v)\varphi^d}R(\omega)
    =R(\omega+\omega^{2^d})R(\omega+\omega^{2^d})=e.
\]
Therefore, there always exists $z\in xH$ such that $z^2=e$. This proves that $H$ is a perfect code of $G$.

Next, we show that $H$ is a nonnormal subgroup of $G$ by proving that $H^{s^{2^d-1}}\neq H$. It is clear that
\[
    H^{s^{2^d-1}}=\{e, R(\omega^{2^d}), R(\omega^{2^{d+1}-1}), R(\omega^{2^d}+\omega^{2^{d+1}-1})\}.
\]
Suppose for a contradiction that $H^{s^{2^d-1}}=H$. Then by~\eqref{eq:0}, the conjugation of $\varphi^d$ fixes one of $R(\omega^{2^d})$, $R(\omega^{2^{d+1}-1})$ or $R(\omega^{2^d}+\omega^{2^{d+1}-1})$ and interchanges the other two. Since
\[
    R(\omega^{2^d})^{\varphi^d}=R(\omega)\neq R(\omega^{2^d})\ \text{ and }\ R(\omega^{2^{d+1}-1})^{\varphi^d}=R(\omega^{2-2^d})\neq R(\omega^{2^{d+1}-1}),
\]
it follows that $\varphi^d$ interchanges $R(\omega^{2^d})$ and $R(\omega^{2^{d+1}-1})$. However,
\[
    R(\omega^{2^d})^{\varphi^d}=R(\omega)\neq R(\omega^{2^{d+1}-1}),
\]
a contradiction. Therefore, $H$ is a nonnormal subgroup of $G$.

Finally, by Proposition~\ref{prop:closure}, it remains to show that $H^G\leq A\leq \mathrm{N}_G(H)$. The fact $A\leq \mathrm{N}_G(H)$ is established in~\eqref{eq:normal}. Since $H\leq V$ and $V\trianglelefteq G$, we infer that $H^G\leq V^G=V\leq A$. This completes the proof of statement~\eqref{enu:c2b}.
\end{proof}

In our last infinite family of counterexamples, neither~\eqref{enu:cond1} is satisfied for all $g\in G$, nor~\eqref{enu:cond2} is satisfied for all $g\in G\setminus A$. (Recall that every $g$ in $A$ satisfies~\eqref{enu:cond1} and does not satisfy~\eqref{enu:cond2}.)

\begin{construction}\label{const:3}
    Let $q=p^f$ such that both the prime $p$ and positive integer $f$ are odd, let $V=R(\FF_q)\leq \Sym(\FF_q)$, and let $s$ and $\varphi$ be permutations on $\bbF_q$ defined by~\eqref{eq:s}. Take $G=V\rtimes(\langle s\rangle\rtimes \langle \varphi\rangle)$, $A=V\rtimes \langle \varphi \rangle$ and $H=\langle R(1)\rangle$.
\end{construction}

In Construction~\ref{const:3}, the group $G$ is $\AGaL(1,q)$, the group $A$ has odd order, and $H\cong C_p$. Recall by~\cite[Theorem 3.6]{ZZ2021} that every subgroup of $G$ of odd order must be a perfect code of $G$. We will show that $A$ is not a perfect of $(G,H)$, which indicates that \cite[Theorem 3.6]{ZZ2021} cannot be generalized to group pairs.

\begin{proposition}\label{prop:counterexample3}
    For each triple $(G,A,H)$ in Construction~$\ref{const:3}$, the following statements hold:
    \begin{enumerate}[\rm(a)]
        \item \label{enu:c3a} for each $g\in G\setminus A$, either $gA$ contains an element $x$ such that $x^2\in H^y$ for some $y\in A$ or $|A\{g,g^{-1}\}A|/|A|$ is even;
        \item \label{enu:c3b} $A$ is not a perfect code of $(G,H)$.
    \end{enumerate}
\end{proposition}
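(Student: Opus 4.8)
The plan is to reduce condition~\eqref{enu:cond1} to the assertion ``$gA$ contains an element whose square lies in $H$'', to establish~\eqref{enu:c3a} by a double-coset count combined with an elementary divisibility fact, and to establish~\eqref{enu:c3b} by checking the hypotheses of Proposition~\ref{prop:closure}. Throughout I would work modulo the normal subgroup $V=R(\FF_q)$, writing $\bar G=G/V$, which is isomorphic to $\langle s\rangle\rtimes\langle\varphi\rangle\cong C_{q-1}\rtimes C_f$ with $\bar s^{\bar\varphi}=\bar s^p$ and $\bar A=\langle\bar\varphi\rangle$. Two preliminaries are needed: the squaring formula (compare~\eqref{eq:x^2})
\[
(R(v)s^i\varphi^j)^2=R\bigl(v+v^{p^{-j}}\omega^{-i}\bigr)\,s^{i(1+p^{-j})}\varphi^{2j},
\]
and the reduction of~\eqref{enu:cond1}: since $H\le V$ is abelian and $\varphi$ fixes $H=R(\FF_p)$ pointwise by Fermat's little theorem, $H^y=H$ for every $y\in A$, so~\eqref{enu:cond1} for a coset $gA$ is equivalent to ``$gA$ contains $x$ with $x^2\in H$''.

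For~\eqref{enu:c3a}: the left cosets of $A$ in $G$ are $s^kA$ with $0\le k\le q-2$, and for $g\in s^kA$ one has $AgA=As^kA$ and $Ag^{-1}A=As^{-k}A$. Since $|A|=qf$ is odd, the index $|A|/|A\cap A^g|$ is odd, so if $AgA\ne Ag^{-1}A$ then $|A\{g,g^{-1}\}A|/|A|=2|A|/|A\cap A^g|$ is even and~\eqref{enu:cond2} holds. So it remains to treat $k\ne0$ with $As^kA=As^{-k}A$. Passing to $\bar G$ one computes $\bar A\bar s^k\bar A=\{\bar s^a\bar\varphi^l : a\in k\langle p\rangle,\ 0\le l<f\}$, where $\langle p\rangle\le(\Z/(q-1))^\times$ has order $f$; hence $As^kA=As^{-k}A$ iff $k\langle p\rangle=-k\langle p\rangle$ iff $(q-1)\mid k(p^i+1)$ for some $0\le i<f$. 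The arithmetic heart of the matter is that, because $f$ is odd, $\gcd(q-1,p^i+1)=\gcd(p^f-1,p^i+1)=2$ for every such $i$ (reduce via $\gcd(p^f-1,p^{2i}-1)=p^{\gcd(f,2i)}-1=p^{\gcd(f,i)}-1$ together with $\gcd(p^i-1,p^i+1)=2$), so $(q-1)\mid k(p^i+1)$ forces $(q-1)/2\mid k$, i.e.\ $k\in\{0,(q-1)/2\}$. Consequently the only coset of $A$ contained in $G\setminus A$ with $AgA=Ag^{-1}A$ is $s^{(q-1)/2}A$, and it contains $\tau=s^{(q-1)/2}$ with $\tau^2=e\in H$, so~\eqref{enu:cond1} holds. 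This proves~\eqref{enu:c3a}.

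For~\eqref{enu:c3b}, I would verify the three hypotheses of Proposition~\ref{prop:closure}. First, $H$ is nonnormal in $G$, since $R(1)^s=R(\omega)\notin R(\FF_p)=H$ (as $\omega$ has order $p^f-1>p-1$; here $f\ge3$). Second, $H^G\le A\le\mathrm{N}_G(H)$: from $H\le V\trianglelefteq G$ we get $H^G\le V\le A$, while $A=\langle V,\varphi\rangle$ normalizes $H$ because $V$ is abelian and $\varphi$ fixes $R(\FF_p)$ pointwise. Third, $H$ is a perfect code of $G$: apply Theorem~\ref{thm:1}; as $|H|=p$ is odd, the index condition there holds automatically, so it remains to show that every $x\in G$ with $x^2\in H$ has some $y\in xH$ with $y^2=e$. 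By the squaring formula, $x^2\in V$ forces $\varphi^{2j}=e$, hence $j=0$ ($f$ odd) and then $i\in\{0,(q-1)/2\}$; if $i=0$ then $x=R(v)$ and $x^2=R(2v)\in H$ forces $v\in\FF_p$ ($2$ being invertible in $\FF_p$), i.e.\ $x\in H$, so $y=e$ works, while if $i=(q-1)/2$ then $x=R(v)\tau$ with $\tau$ inverting $V$ (as $\omega^{(q-1)/2}=-1$), so $x^2=R(v)R(-v)=e$ and $y=x$ works. With all three hypotheses verified, Proposition~\ref{prop:closure} yields that $A$ is not a perfect code of $(G,H)$, proving~\eqref{enu:c3b}.

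I expect the main obstacle to lie in two places: in~\eqref{enu:c3a}, translating ``$As^kA=As^{-k}A$'' into the divisibility $(q-1)\mid k(p^i+1)$ and then carrying out the gcd computation $\gcd(p^f-1,p^i+1)=2$ (the only step using oddness of $f$); and in~\eqref{enu:c3b}, pinning down that the elements of $G$ whose square lies in $V$ are exactly those in $V\cup V\tau$, after which the perfect-code verification is short because $\tau$ inverts the abelian group $V$.
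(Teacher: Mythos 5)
Your argument is correct and follows essentially the same route as the paper's: for~\eqref{enu:c3a}, the same reduction to cosets $s^kA$, the same divisibility condition $(q-1)\mid k(p^j+1)$ and the same $\gcd(p^f-1,p^j+1)=2$ computation forcing $k=(q-1)/2$ with $(s^{(q-1)/2})^2=e\in H$ (your passage to $G/V$ versus the paper's direct use of $V\cap\langle s,\varphi\rangle=1$ is only cosmetic); and for~\eqref{enu:c3b}, the same application of Proposition~\ref{prop:closure} with the same verifications of nonnormality and $H^G\leq A\leq\mathrm{N}_G(H)$. The one sub-step you do differently is showing that $H$ is a perfect code of $G$: the paper simply invokes \cite[Theorem 3.6]{ZZ2021} (odd-order subgroups are perfect codes, recalled just before the proposition), whereas you verify it directly via Theorem~\ref{thm:1} and your squaring formula, which is a correct, self-contained but longer alternative. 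Your parenthetical assumption $f\geq 3$ is in fact needed for the nonnormality of $H$ (Construction~\ref{const:3} as written allows $f=1$, where $H=V=A$ and the paper's claim $R(\omega)\notin H$ fails), so flagging it is a small improvement over the paper's proof, which uses the same fact tacitly.
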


\begin{proof}
To prove statement~\eqref{enu:c3a}, take an arbitrary $g\in G\setminus A$. Noticing $G=\langle s\rangle A$, we assume that $g\in s^{k}A$ for some $k\in\{1,\ldots,q-2\}$. Thus $AgA=As^{k}A$ and $Ag^{-1}A=As^{-k}A$. If $AgA\ne Ag^{-1}A$, then $|A\{g,g^{-1}\}A|/|A|=2(|AgA|/|A|)$ and therefore $|A\{g,g^{-1}\}A|/|A|$ is even, satisfying statement~\eqref{enu:c3a}. Now assume $AgA=Ag^{-1}A$. Then
\[
As^{-k}A=As^{k}A=V\langle\varphi\rangle s^{k}V\langle\varphi\rangle=V\langle\varphi\rangle s^{k} \langle\varphi\rangle.
\]
Since $V\cap \langle s,\varphi\rangle=\{e\}$, it follows that $s^{-k}\in \langle\varphi\rangle s^{k} \langle\varphi\rangle$, and so $s^{-k}=\varphi^{i}s^{k}\varphi^{j}$ for some $i,j\in \{0,1,\ldots,f-1\}$. Then
$s^{-k}=\varphi^{i+j}(s^{k})^{\varphi^{j}}=\varphi^{i+j}s^{kp^{j}}$, which combined with $\langle s\rangle\cap \langle \varphi\rangle=1$ leads to $s^{k(p^{j}+1)}=e$. As a consequence,
\begin{equation}\label{eq:3}
    q-1\mid k(p^j+1).
\end{equation}
Note that $p^j+1$ divides $p^{jf}+1$ as $f$ is odd. We derive that each common divisor of $q-1$ and $p^j+1$ divides $2$, since $q-1=p^f-1$ divides $p^{jf}-1$. Since $p$ is odd, it follows that $\gcd(q-1,p^j+1)=2$. This together with~\eqref{eq:3} yields $(q-1)/2$ divides $k$. Then since $k\in\{1,\ldots,q-2\}$, we conclude that $k=(q-1)/2$. Consequently, $(s^k)^2=e\in H$. This proves statement~\eqref{enu:c3a}.

Noting that $|A|$ is odd, we deduce from~\cite[Theorem 3.6]{ZZ2021} that $H$ is a perfect code of $G$. Since $R(1)^s=R(\omega)\notin H$, we obtain that $H$ is a nonnormal subgroup of $G$. Moreover, it follows from $V\trianglelefteq G$ that $H^G\leq V^G=V\leq A$. Finally, seeing that $H^\varphi=\langle R(1)\rangle^\varphi=\langle R(1)\rangle=H$ and $H\trianglelefteq V$, we conclude that $A\leq \mathrm{N}_G(H)$. Therefore, it follows from Proposition~\ref{prop:closure} that $A$ is not a perfect code of $(G,H)$, as statement~\eqref{enu:c3b} asserts.
\end{proof}

In summary, the three infinite families of counterexamples constructed in this section illustrate the following corollary from three perspectives.

\begin{corollary}
    The converse of Theorem~$\ref{thm:1.2}$ does not hold. Consequently, the answer to Question~$\ref{ques:1}$ is negative.
\end{corollary}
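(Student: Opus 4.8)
The plan is to assemble this corollary directly from the three propositions already proved in the section; no fresh argument is needed. Recall that the converse of Theorem~\ref{thm:1.2} would assert: if, for every $g\in G$, condition~\eqref{enu:cond1} or condition~\eqref{enu:cond2} holds, then $A$ is a perfect code of $(G,H)$. To refute this it suffices to exhibit a single triple $(G,A,H)$ with $H\leq A\leq G$ for which the hypothesis holds while the conclusion fails.

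First I would invoke Construction~\ref{const:1} together with Proposition~\ref{prop:counterexample1}. By part~\eqref{enu:c1a}, for every $g\in G$ the coset $gA$ contains an element $x$ with $x^2\in H^y$ for some $y\in A$; that is, \eqref{enu:cond1} holds for all $g\in G$, so the hypothesis of the converse of Theorem~\ref{thm:1.2} is satisfied. By part~\eqref{enu:c1b}, however, $A$ is not a perfect code of $(G,H)$, so the conclusion fails. Hence the converse of Theorem~\ref{thm:1.2} does not hold, and consequently Question~\ref{ques:1} has a negative answer.

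Finally I would record that Constructions~\ref{const:2} and~\ref{const:3}, via Propositions~\ref{prop:counterexample2} and~\ref{prop:counterexample3}, supply two further infinite families of counterexamples, so that the corollary is illustrated ``from three perspectives'': in Construction~\ref{const:2} it is~\eqref{enu:cond2} rather than~\eqref{enu:cond1} that holds for all $g\in G\setminus A$, and in Construction~\ref{const:3} neither condition holds uniformly on $G\setminus A$; in both cases one uses that every $g\in A$ automatically satisfies~\eqref{enu:cond1} (take $x=e$, $y=e$) to see that the hypothesis of the converse is met on all of $G$, not merely on $G\setminus A$. Since the proof is entirely a matter of quoting these results, there is no real obstacle here; all the substance resides in the earlier propositions, and in particular in the application of Proposition~\ref{prop:closure} used to verify Constructions~\ref{const:2} and~\ref{const:3}.
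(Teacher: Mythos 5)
Your proposal is correct and matches the paper's (implicit) argument exactly: the corollary follows immediately from Propositions~\ref{prop:counterexample1},~\ref{prop:counterexample2} and~\ref{prop:counterexample3}, any one of which supplies a triple $(G,A,H)$ satisfying the hypothesis of the converse of Theorem~\ref{thm:1.2} while $A$ fails to be a perfect code of $(G,H)$. Your remark that every $g\in A$ trivially satisfies condition~\eqref{enu:cond1} is also the observation the paper itself makes to cover Constructions~\ref{const:2} and~\ref{const:3}.
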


\section{Perfect codes in $S_n$}\label{sec:4}

In this section, we first prove Proposition~\ref{prop:3}, giving the perfect code $S_m$ for the pair $(S_n,S_\ell)$ whenever $1\leq\ell<m<n$.
For a positive integer $k$, denote \[[k]=\{1,\ldots,k\}.\]

\begin{proof}[Proof of Proposition~$\ref{prop:3}$]
    Without loss of generality, assume that $S_n=\Sym([n])$, that $S_m$ is the pointwise stabilizer of $[n]\setminus[m]$ in $S_n$, and that $S_\ell$ is the pointwise stabilizer of $[n]\setminus[\ell]$ in $S_n$. Let
    \[
        \Omega=\{\sigma\colon[n]\setminus[m]\to[n]\mid \sigma\text{ is injective}\}
    \]
    be the set of injections from $[n]\setminus[m]$ to $[n]$. Take arbitrary $\sigma\in \Omega$ and $k\in [m]$, and regard $\sigma^0$ as the identity injection from $[n]\setminus [m]$ to $[n]$.

    We construct a sequence $k,k^{\sigma^{-1}},\ldots, k^{\sigma^{-j(\sigma,k)}}$ of pairwise distinct elements in $[n]$, where $j(\sigma,k)$ is a nonnegative integer, such that $k^{\sigma^{-(i-1)}}$ is the image of $k^{\sigma^{-i}}$ for each $i\in\{1,\ldots,j(\sigma,k)\}$ and
    \[
    k^{\sigma^{-j(\sigma,k)}}\notin ([n]\setminus[m])^\sigma.
    \]
    If $k\notin ([n]\setminus [m])^\sigma$, then let $j(\sigma,k)=0$. Otherwise, we have $k\in ([n]\setminus [m])^\sigma$, and so there exists a (unique) preimage $k^{\sigma^{-1}}\in[n]\setminus[m]$ of $k$ under $\sigma$. Moreover, since $k\in [m]$, we have $k\neq k^{\sigma^{-1}}$.
    Inductively, suppose that we have obtained pairwise distinct $k,k^{\sigma^{-1}},\ldots,k^{\sigma^{-s}}$ for some positive integer $s$ such that $k^{\sigma^{-(i-1)}}$ is the image of $k^{\sigma^{-i}}$ for each $i\in\{1,\ldots,s\}$. If $k^{\sigma^{-s}}\notin ([n]\setminus [m])^\sigma$, then let $j(\sigma,k)=s$. Otherwise, $k^{\sigma^{-s}}\in ([n]\setminus [m])^\sigma$, which indicates the existence of the preimage $k^{\sigma^{-(s+1)}}\in[n]\setminus[m]$ of $k^{\sigma^{-s}}$ under $\sigma$.
    Suppose for a contradiction that
    \begin{equation}\label{eq:4}
    k^{\sigma^{-(s+1)}}=k^{\sigma^{-t}}
    \end{equation}
    for some $t\in\{0,1,\ldots,s\}$. Since $k\in[m]$, it follows that $t\in\{1,\ldots,s\}$. Then applying $\sigma$ on both sides of~\eqref{eq:4} gives $k^{\sigma^{-s}}=k^{\sigma^{-(t-1)}}$, which contradicts the inductive hypothesis.
    Therefore, $k^{\sigma^{-1}},\ldots,k^{\sigma^{-(s+1)}}$ are pairwise distinct. Since $[n]$ is finite, the above inductive construction gives a nonnegative integer $j(\sigma,k)$ such that $k,k^{\sigma^{-1}},\ldots, k^{\sigma^{-j(\sigma,k)}}$ is the desired sequence.

    It is worth remarking in the paragraph above that $j(\sigma,k)=0$ if and only if $k\notin ([n]\setminus[m])^\sigma$, and that
    \begin{equation}\label{eq:range}
        k^{\sigma^{-s}}\in ([n]\setminus[m])^\sigma \ \text{ for each }\, s\in\{0,1,\ldots,j(\sigma,k)-1\}.
    \end{equation}
    We define a mapping $x(\sigma)\colon[n]\to[n]$ by letting
    \begin{equation}\label{eq:x_sigma}
        i^{x(\sigma)}=
        \begin{cases}
            i^\sigma&\text{ if }\ i\in [n]\setminus [m]\\
            i^{\sigma^{-j(\sigma,i)}}&\text{ if }\ i\in [m].
        \end{cases}
    \end{equation}

    We claim that $x(\sigma)\in S_n$. Assume that $a^{x(\sigma)}=b^{x(\sigma)}$ for some $a,b\in [n]$. Since $([n]\setminus [m])^{x(\sigma)}=([n]\setminus [m])^\sigma$ and $[m]^{x(\sigma)}\cap ([n]\setminus [m])^\sigma=\emptyset$, either $a,b\in [n]\setminus [m]$ or $a,b\in [m]$. For the former, we obtain $a=b$ by the injectivity of $\sigma$. Now assume that $a,b\in [m]$. Then $a^{\sigma^{-j(\sigma,a)}}=a^{x(\sigma)}=b^{x(\sigma)}=b^{\sigma^{-j(\sigma,b)}}$. Without loss of generality, assume that $j(\sigma,b)\leq j(\sigma,a)$. It follows that $a^{\sigma^{-j(\sigma,a)+j(\sigma,b)}}=b$. Note by~\eqref{eq:range} that $a^{\sigma^{-i}}\in [n]\setminus [m]$ for each $i\in\{1,\ldots,j(\sigma,a)\}$. This together with $b\in[m]$ forces $j(\sigma,a)=j(\sigma,b)$ and hence $a=b$. Thus, $x(\sigma)$ is a permutation on $[n]$, as claimed.

    Let $X=\{x(\sigma) \mid \sigma\in\Omega\}$. Assume that $x(\sigma_1)S_m=x(\sigma_2)S_m$. Then $x(\sigma_1)x(\sigma_2)^{-1}\in S_m$ fixes each $k\in [n]\setminus [m]$. Consequently, $k^{\sigma_1}=k^{x(\sigma_1)}=k^{x(\sigma_2)}=k^{\sigma_2}$ for all $k\in [n]\setminus[m]$, which means $\sigma_1=\sigma_2$. This shows that distinct $\sigma$ in $\Omega$ gives distinct $x(\sigma)$ and further distinct $x(\sigma)S_m$. Then since $|X|=|\Omega|=n!/m!=|S_n|/|S_m|$, we see that $X$ is a left transversal of $S_m$ in $S_n$.
    Hence, by Theorem~\ref{thm:pc}, it remains to prove $XS_\ell=S_\ell X^{-1}$. Noting that $|XS_\ell|=|(XS_\ell)^{-1}|=|S_\ell X^{-1}|$, we only need to show that $XS_\ell\subseteq S_\ell X^{-1}$.

    We first present the cycle decomposition of $x(\sigma)$ with $\sigma\in\Omega$. Let
    \[
        I(\sigma)=\{k\in [m]\mid j(\sigma,k)>0\}.
    \]
    For each $k\in [m]\setminus I(\sigma)$, we have $k^{x(\sigma)}=k^{\sigma^0}=k$. Note by~\eqref{eq:range} that $k^{\sigma^{-j(\sigma,k)}},\ldots,k^{\sigma^{-1}}\in [n]\setminus [m]$. Thus, for each $k\in I(\sigma)$, the orbit of $\langle x(\sigma) \rangle$ containing $k$ is $\{k^{\sigma^{-j(\sigma,k)}},\ldots,k^{\sigma^{-1}},k\}$. Therefore, the cycle decomposition of $x(\sigma)$ can be written as
    \begin{equation}\label{eq:cycle}
        x(\sigma)=c_1\cdots c_s\prod_{k\in I(\sigma)}(k^{\sigma^{-j(\sigma,k)}},\ldots,k^{\sigma^{-1}},k),
    \end{equation}
    where $c_1,\ldots,c_s$ are disjoint cycles in $\Sym([n]\setminus[m])$ for some integer $s\geq 0$. In particular, each cycle in~\eqref{eq:cycle} contains at most one point in $[m]$, and $x(\sigma)$ fixes $[m]\setminus I(\sigma)$ pointwise.

    Take arbitrary $\sigma\in\Omega$ and $h\in S_\ell$. Assume that the cycle decomposition of $x(\sigma)$ is as in~\eqref{eq:cycle}. Then define $y(\sigma)$ to be the element of $S_n$ with cycle decomposition
    \begin{equation}\label{eq:y}
        y(\sigma)=c_1^{-1}\cdots c_s^{-1}\prod_{k\in I(\sigma)}(k^{\sigma^{-1}},\ldots,k^{\sigma^{-j(\sigma,k)}},k^h).
    \end{equation}
    Note that, since $h\in S_\ell<S_m$ stabilizes $[m]$, the right-hand side of~\eqref{eq:y} is indeed a cycle decomposition.

    We claim that $y(\sigma)\in X$. Let $\tau$ be the restriction of $y(\sigma)$ on $[n]\setminus[m]$. Then $\tau\in \Omega$. We verify the claim by showing $y(\sigma)=x(\tau)$. For $i\in [n]\setminus[m]$, we have $i^{y(\sigma)}=i^{\tau}=i^{x(\tau)}$ by~\eqref{eq:x_sigma}. Take an arbitrary $i\in [m]$. Since $h\in S_\ell<S_m$, we have $i=k^h$ for some $k\in[m]$.
    Since $\tau$ is the restriction of $y(\sigma)$ on $[n]\setminus[m]$, it follows from~\eqref{eq:y} that for the sequence $i,i^{\tau^{-1}},\ldots,i^{\tau^{-j(\tau,i)}}$ is in fact $k^h,k^{\sigma^{-j(\sigma,k)}},\ldots,k^{\sigma^{-1}}$. In particular, either $j(\sigma,k)>0$ and $i^{\tau^{-j(\tau,i)}}=k^{\sigma^{-1}}$, or $j(\sigma,k)=j(\tau,i)=0$. For the former,~\eqref{eq:y} yields $i^{y(\sigma)}=k^{\sigma^{-1}}=i^{\tau^{-j(\tau,i)}}$. For the lattere, $k\notin I(\sigma)$, and~\eqref{eq:y} shows $i^{y(\sigma)}=i=i^{\tau^0}=i^{\tau^{-j(\tau,i)}}$. In either case, we obtain $i^{y(\sigma)}=i^{\tau^{-j(\tau,i)}}=i^{x(\tau)}$ by~\eqref{eq:x_sigma}. Therefore, $y(\sigma)=x(\tau)$, which proves the claim.

    Now we complete the proof of $XS_\ell\subseteq S_\ell X^{-1}$ by demonstrating that $x(\sigma)hy(\sigma)\in S_\ell$, or equivalently, $x(\sigma)hy(\sigma)$ fixes every point in $[n]\setminus[\ell]$. Take an arbitrary $i\in [n]\setminus[\ell]$. Then we have $i^h=i$. Assume first that $i^{x(\sigma)}\in [\ell]\subseteq [m]$. In this case, it follows from~\eqref{eq:cycle} that either $i^{x(\sigma)}=i\in [m]\setminus I(\sigma)$ or $i^{x(\sigma)}\in I(\sigma)$. For the former, $i^{x(\sigma)hy(\sigma)}=i^{hy(\sigma)}=i^{y(\sigma)}=i$ by~\eqref{eq:y}. For the latter, we derive from~\eqref{eq:cycle} that $i\in [n]\setminus [m]$ and then derive from~\eqref{eq:y} and~\eqref{eq:x_sigma} that
    \[
    i^{x(\sigma)hy(\sigma)}=(i^{x(\sigma)})^{hy(\sigma)}=(i^{x(\sigma)})^{\sigma^{-1}}=(i^\sigma)^{\sigma^{-1}}=i.
    \]
    Assume next that $i^{x(\sigma)}\notin [\ell]$. In this case, $(i^{x(\sigma)})^h=i^{x(\sigma)}$, and so $i^{x(\sigma)hy(\sigma)}=(i^{x(\sigma)})^{y(\sigma)}$. By~\eqref{eq:cycle} and~\eqref{eq:y}, either $(i^{x(\sigma)})^{y(\sigma)}=i$, or $i=k^{\sigma^{-1}}$ and $i^{x(\sigma)}=k$ for some $k\in I(\sigma)$. Note for the latter that $k^h=k$, which in conjunction with~\eqref{eq:y} implies 
    \[
    (i^{x(\sigma)})^{y(\sigma)}=k^{y(\sigma)}=(k^h)^{y(\sigma)}=k^{\sigma^{-1}}=i. 
    \]
    We conclude that $i^{x(\sigma)hy(\sigma)}=(i^{x(\sigma)})^{y(\sigma)}=i$. Thus, $x(\sigma)hy(\sigma)$ fixes every $i\in[n]\setminus[\ell]$, as desired.
\end{proof}

In the remainder of this section, let us initiate the determination of which maximal subgroups of $S_n$ are perfect codes of $S_n$. Recall that each intransitive maximal subgroup of $S_n$ has the form $S_m\times S_{n-m}$ for some $m\in\{1,\ldots,n-1\}$ such that $n\neq2m$. The following proposition shows that all intransitive maximal groups of $S_n$ are perfect codes of $S_n$.

\begin{proposition}\label{prop:intransitive}
	Let $G=\Sym([n])=S_n$ and $A=\Sym([m])\times\Sym([n]\setminus[m])=S_m\times S_{n-m}$ with $1\leq m\leq n-1$. Then $A$ is a perfect code of $G$.
\end{proposition}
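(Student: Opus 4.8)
The plan is to realise $A$ as the setwise stabiliser of a subset of $[n]$ and then write down explicitly an inverse-closed left transversal of $A$ in $G$ built out of involutions, after which Corollary~\ref{cor:new} closes the argument. Concretely, I would identify $G$ with $\Sym([n])$ and set $\Delta=[m]$; then $A=\Sym([m])\times\Sym([n]\setminus[m])$ is precisely the setwise stabiliser $G_\Delta=\{g\in G\mid\Delta^g=\Delta\}$, because a permutation of $[n]$ fixes $[m]$ as a set exactly when it restricts to permutations of $[m]$ and of $[n]\setminus[m]$. By Corollary~\ref{cor:new} it then suffices to exhibit an inverse-closed left transversal of $A$ in $G$.

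The first step is a coset--subset dictionary. For $g_1,g_2\in G$ we have $g_1A=g_2A$ if and only if $g_2^{-1}g_1\in G_\Delta$, that is, $\Delta^{g_2^{-1}g_1}=\Delta$, which is equivalent to $\Delta^{g_1^{-1}}=\Delta^{g_2^{-1}}$; hence $gA\mapsto\Delta^{g^{-1}}$ is a well-defined injection from the left cosets of $A$ in $G$ into the set $\binom{[n]}{m}$ of $m$-element subsets of $[n]$, and it is a bijection since both sides have size $|G:A|=\binom{n}{m}$. The second step is the construction of the transversal. For each $\Sigma\in\binom{[n]}{m}$, note that $|\Sigma\setminus\Delta|=|\Delta\setminus\Sigma|=m-|\Sigma\cap\Delta|$; fix a bijection $\pi_\Sigma\colon\Sigma\setminus\Delta\to\Delta\setminus\Sigma$ and let $x_\Sigma\in G$ be the product of the pairwise disjoint transpositions $(a,a^{\pi_\Sigma})$ over $a\in\Sigma\setminus\Delta$, so that $x_\Sigma$ fixes every point of $(\Sigma\cap\Delta)\cup([n]\setminus(\Sigma\cup\Delta))$ and, in particular, $x_\Delta=e$. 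Then $x_\Sigma^2=e$, and $\Sigma^{x_\Sigma}=(\Sigma\cap\Delta)\cup(\Delta\setminus\Sigma)=\Delta$ because $x_\Sigma$ fixes $\Sigma\cap\Delta$ pointwise and sends $\Sigma\setminus\Delta$ bijectively onto $\Delta\setminus\Sigma$; consequently $\Delta^{x_\Sigma^{-1}}=\Delta^{x_\Sigma}=\Sigma$, so by the dictionary $x_\Sigma A$ is exactly the coset corresponding to $\Sigma$. It follows that $X:=\{x_\Sigma\mid\Sigma\in\binom{[n]}{m}\}$ meets every left coset of $A$ in $G$ in a single element, that is, $X$ is a left transversal, and $X=X^{-1}$ because each $x_\Sigma$ is its own inverse. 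By Corollary~\ref{cor:new}, $A$ is a perfect code of $G$.

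I do not expect a real obstacle here. The only delicate points are keeping the left/right action conventions straight in the coset--subset dictionary, and the verification $\Sigma^{x_\Sigma}=\Delta$, which is immediate once one records how $x_\Sigma$ acts on the three blocks $\Sigma\cap\Delta$, $\Sigma\setminus\Delta$ and $[n]\setminus(\Sigma\cup\Delta)$. I would also remark in the write-up that the hypothesis $n\neq 2m$, which is what makes $A$ a maximal subgroup of $S_n$, is never used: the same argument shows that $S_m\times S_{n-m}$ is a perfect code of $S_n$ for every $1\le m\le n-1$.
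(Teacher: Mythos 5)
Your proof is correct, but it takes a different route from the paper's. The paper verifies criterion (c) of Theorem~\ref{thm:1} locally: given $x\in G$ with $x^2\in A$, it takes $y$ to be the product of the pairwise disjoint transpositions $(k,k^x)$ over those $k\in[m]$ with $k^x\notin[m]$; then $y^2=e$ and $y$ lies in the coset of $x$, so Theorem~\ref{thm:1} finishes the argument in a few lines with no enumeration of cosets. You instead verify the transversal criterion (Corollary~\ref{cor:new}(b)) globally, building an explicit inverse-closed left transversal $\{x_\Sigma\mid\Sigma\in\binom{[n]}{m}\}$ consisting of involutions, via the coset--subset dictionary $gA\mapsto[m]^{g^{-1}}$; your dictionary, the disjointness of the transpositions defining $x_\Sigma$, and the computations $\Sigma^{x_\Sigma}=[m]$ and $[m]^{x_\Sigma^{-1}}=\Sigma$ are all correct, so $X$ is indeed an inverse-closed left transversal. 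The two arguments are close in substance---your $x_\Sigma$ for $\Sigma=[m]^x$, with the pairing of $\Sigma\setminus[m]$ and $[m]\setminus\Sigma$ induced by $x$ itself, is exactly the paper's $y$---but yours is more constructive: it exhibits the whole transversal (hence a concrete Cayley graph realizing $A$ as a perfect code) rather than certifying, coset by coset, the existence of a suitable involution, at the cost of the bijection and counting bookkeeping. Your closing remark is consistent with the paper: the proposition is stated for all $1\le m\le n-1$, and the condition $n\neq2m$ (relevant only to maximality) plays no role in either proof.
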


\begin{proof}
Take an arbitrary $x\in G$ such that $x^2\in A$. Let $y$ be the product of elements in the set
\[
I=\{(k,k^x)\mid k\in [m]\;\text{and}\; k^x\notin[m]\}.
\]
Here, if $I=\emptyset$, we define $y=e$. Since the $2$-cycles $(k,k^x)$ are pairwise disjoint, it follows that $y^2=e$. Moreover, since $yx^{-1}$ maps elements of $[m]$ into $[m]$, we have $y\in Ax$. Therefore, Theorem~\ref{thm:1} implies that $A$ is a perfect code of $G$.
\end{proof}

Next, we study which maximal affine groups of $S_n$ are perfect codes of $S_n$. To obtain our partial result, we need~\cite[Theorem~1.1]{Z2023}, which shows that subgroup perfect codes are essentially determined by their Sylow $2$-subgroups. In fact, this result states that a subgroup $H$ of a finite group $G$ is a perfect code of $G$ if and only if $H$ has a Sylow $2$-subgroup that is a perfect code of $G$. Another ingredient needed is the following lemma.

\begin{lemma}\label{lm:normaliser}
    Let $G$ be a permutation group of degree $2^{t}s$, where $t$ is a positive integer and $s$ is an odd integer, such that $G$ has a semiregular subgroup $Q$ of order $2^{t}$. Then $xQ$ contains an involution for every $x\in \mathrm{N}_{G}(Q)\setminus Q$ with $x^2\in Q$.
\end{lemma}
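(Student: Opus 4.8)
The plan is to replace $x$ by the $2$-group it generates together with $Q$, and then read off an involution from the orbit structure of that group on the permutation domain. Write $P=\langle Q,x\rangle$. Since $x$ normalises $Q$ we have $Q\trianglelefteq P$, and since $x\notin Q$ but $x^{2}\in Q$ the quotient $P/Q$ has order $2$. Hence $P$ is a $2$-group of order $2^{t+1}$ with $P\setminus Q=xQ$, so it suffices to exhibit an involution of $P$ lying outside $Q$; such an element automatically belongs to the coset $xQ$.

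Next I would bring in the permutation action. Let $\Omega$ be the set on which $G$ acts, so $|\Omega|=2^{t}s$ with $s$ odd, and restrict this action to $P$. Because $Q$ is semiregular of order $2^{t}$, every $Q$-orbit on $\Omega$ has length exactly $2^{t}$. Each $P$-orbit is a union of $Q$-orbits and has length dividing $|P|=2^{t+1}$, so every $P$-orbit has length $2^{t}$ or $2^{t+1}$. If all $P$-orbits had length $2^{t+1}$, then $2^{t+1}$ would divide $|\Omega|=2^{t}s$, contradicting that $s$ is odd. Therefore some $P$-orbit $\Delta$ has length $2^{t}$.

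Finally, since $\Delta$ is a union of $Q$-orbits of length $2^{t}=|\Delta|$, it is a single $Q$-orbit, so $Q$ is transitive on $\Delta$ and for $\omega\in\Delta$ we get $|P_{\omega}|=|P|/|\Delta|=2$; let $y$ be the nontrivial element of $P_{\omega}$, which is an involution. Because $Q$ is semiregular, $Q_{\omega}=1$, so $y\notin Q$ and hence $y\in P\setminus Q=xQ$, as required. The only point needing care is the orbit-length bookkeeping---that $P$-orbits can only have length $2^{t}$ or $2^{t+1}$, and that a point stabiliser in a short orbit genuinely contributes an involution outside $Q$---with the semiregularity of $Q$ doing the work in both places; the $2$-adic counting step that forces a short orbit is the key idea but is immediate.
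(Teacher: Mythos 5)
Your proof is correct and follows essentially the same route as the paper: form the $2$-group $\langle Q,x\rangle$ of order $2^{t+1}$, use semiregularity of $Q$ and the odd part $s$ to force an orbit of length $2^t$, and extract an involution from the order-$2$ point stabiliser, which must lie outside $Q$ and hence in $xQ$. No gaps; the orbit-length bookkeeping you flag is exactly the argument the paper gives.
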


\begin{proof}
    Let $x\in \mathrm{N}_{G}(Q)\setminus Q$ such that $x^2\in Q$, and let $H=\langle Q,x\rangle$. Then $H$ is of order $2^{t+1}$. Since $Q$ is a semiregular group of order $2^{t}$, every orbit of $H$ is of length $2^{t}$ or $2^{t+1}$. Moreover, since the degree of $G$ is $2^ts$ with $s$ odd, $H$ has at least one orbit of length $2^{t}$. Take a point $\alpha$ from such an orbit. Then the stabilizer of $\alpha$ in $H$ has order $2$, which means that there exists an involution $y\in H$ satisfying $\alpha^y=\alpha$. Since $Q$ is semiregular, we have $y\notin Q$, and so $y\in xQ$. This completes the proof.
\end{proof}

We are now ready to prove our conclusion regarding affine perfect codes.

\begin{proposition}\label{prop:affine}
    Let $G=\Sym(\bbF_p)=S_p$ and $H=\AGL(1,p)\leq G$, where $p$ is an odd prime. Then $H$ is a perfect code of $G$.
\end{proposition}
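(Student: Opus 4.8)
The plan is to pass to a Sylow $2$-subgroup of $H$ and then invoke Lemma~\ref{lm:normaliser}. Write $p-1=2^{t}s$ with $s$ odd and $t\geq 1$, and let $H_0$ be the stabiliser of the point $0$ in $H=\AGL(1,p)$. Then $H_0\cong C_{p-1}$ acts regularly on $\bbF_p^\times=\bbF_p\setminus\{0\}$, so a Sylow $2$-subgroup $Q$ of $H_0$ is cyclic of order $2^{t}$, fixes $0$, and acts semiregularly on $\bbF_p^\times$; moreover, since $|H|=p\cdot 2^{t}s$ with $p$ odd, $Q$ is also a Sylow $2$-subgroup of $H$. By~\cite[Theorem~1.1]{Z2023}, $H$ is a perfect code of $G$ as soon as $Q$ is a perfect code of $G$, so it suffices to establish the latter.

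To show $Q$ is a perfect code of $G$, I would verify condition~\eqref{enu:pc3} of Theorem~\ref{thm:1} with $A=Q$. Suppose $x\in G$ satisfies $x^2\in Q$ and $|Q|/|Q\cap Q^x|$ is odd. Since $Q$ is a $2$-group, the index $|Q|/|Q\cap Q^x|$ is a power of $2$, hence equals $1$; that is, $Q^x=Q$ and $x\in\mathrm{N}_G(Q)$. If $x\in Q$ we may take $y=e\in xQ$, so it remains to handle $x\in\mathrm{N}_G(Q)\setminus Q$ with $x^2\in Q$ and produce an involution in $xQ$.

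Here the key point is that $Q$ has a unique fixed point on $\bbF_p$: it fixes $0$ and is semiregular on $\bbF_p^\times$, so $\Fix(Q)=\{0\}$. As $\mathrm{N}_G(Q)$ preserves $\Fix(Q)$ setwise, every element of $\mathrm{N}_G(Q)$ fixes $0$, whence $\mathrm{N}_G(Q)\leq G_0:=\Sym(\bbF_p^\times)\cong S_{p-1}$. Now $G_0$ is a permutation group of degree $p-1=2^{t}s$ with $s$ odd and $t\geq 1$, and $Q\leq G_0$ is semiregular of order $2^{t}$, so Lemma~\ref{lm:normaliser} applies to the pair $(G_0,Q)$ and gives an involution in $xQ$. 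This verifies condition~\eqref{enu:pc3} of Theorem~\ref{thm:1} for $Q$; hence $Q$, and therefore $H$, is a perfect code of $G$.

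The only genuinely structural ingredient is the description of $Q$ and its orbits: once one recognises that a Sylow $2$-subgroup of $\AGL(1,p)$ fixes exactly the point $0$ and is semiregular on the remaining $2^{t}s$ points, the normaliser $\mathrm{N}_G(Q)$ is automatically trapped inside the point stabiliser $S_{p-1}$, which is precisely the hypothesis of Lemma~\ref{lm:normaliser}. The reduction to the Sylow $2$-subgroup via~\cite[Theorem~1.1]{Z2023} and the remark that a proper subgroup of a $2$-group has even index (so that only normalising $x$ need be considered) are routine, and I do not foresee any real obstacle.
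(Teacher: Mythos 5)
Your proof is correct and follows essentially the same route as the paper: reduce to a Sylow $2$-subgroup $Q$ of $H_0=\GL(1,p)$ via \cite[Theorem~1.1]{Z2023}, note that the odd-index hypothesis forces $x\in\mathrm{N}_G(Q)$, and conclude with Lemma~\ref{lm:normaliser} applied inside the point stabilizer $G_0$. The only deviation is that the paper treats the case $x\notin G_0$ by a direct computation showing $x^2=e$, whereas you eliminate that case entirely by observing that $\Fix(Q)=\{0\}$ forces $\mathrm{N}_G(Q)\leq G_0$ --- a valid and slightly cleaner variant.
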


\begin{proof}
    Let $Q$ be a Sylow 2-subgroup of the stabilizer $H_0=\GL(1,p)$. As $H_0$ is semiregular on $\bbF_p\setminus\{0\}$, so is $Q$. Since $|H|/|H_0|=p$ is odd, it follows that $Q$ is also a Sylow 2-subgroup of $H$. By~\cite[Theorem~1.1]{Z2023}, it suffices to show that $Q$ is a perfect code of $G$.
    Take an arbitrary $x\in G$ such that $x^2\in Q$ and $|Q|/|Q\cap Q^x|$ is odd. By Theorem~\ref{thm:1}, we are left to prove that there exists $y\in xQ$ with $y^2=e$. As the conclusion is obvious when $x\in Q$, we may assume $x\notin Q$.

    Since $Q$ is a $2$-group while $|Q|/|Q\cap Q^x|$ is odd, we have $|Q|/|Q\cap Q^x|=1$, which means $x\in\mathrm{N}_G(Q)$.
    First assume that $x$ lies in $G_0$, the stabilizer of $0$ in $G$. Then $x\in \mathrm{N}_{G_0}(Q)\setminus Q$. Regarding $G_0$ and $Q$ as permutation groups on $\bbF_p\setminus\{0\}$, we derive from Lemma~\ref{lm:normaliser} that there exists $y\in xQ$ such that $y^2=e$, as desired.
    Next assume that $x\notin G_0$. In this case, $0^x=k$ for some $k\in\bbF_p\setminus\{0\}$. It then follows from $x^2\in Q\leq G_0$ that $0=0^{x^2}=k^x$, and thus, $k=0^x=k^{x^2}$. Since $Q$ is semiregular on $\bbF_p\setminus\{0\}$, we conclude that $x^2=e$. This completes the proof.
\end{proof}



\section*{Acknowledgements}

The first author acknowledges the support of ARC Discovery Project DP250104965. The second author was supported by the Natural Science Foundation of Chongqing (CSTB2022NSCQ-MSX1054). The third author was supported by the Melbourne Research Scholarship provided by The University of Melbourne.


\begin{thebibliography}{99}

\bibitem{B1973}
N.~L.~Biggs, Perfect codes in graphs,
\emph{J. Comb. Theory, Ser. B}, 15 (1973) 289--296.

\bibitem{CM2013}
T.~Tamizh~Chelvam and S.~Mutharasu, Subgroups as efficient dominating sets in Cayley graphs,
\emph{Discrete Appl. Math.}, 161 (2013) 1187--1190.

\bibitem{CWX2020}
J.~Chen, Y.~Wang and B.~Xia,
Characterization of subgroup perfect codes in Cayley graphs.
\emph{Discrete~Math.}, 343 (2020) 111813.

\bibitem{DS2003}
I.~J.~Dejter and O.~Serra, Efficient dominating sets in Cayley graphs,
\emph{Discrete Appl. Math.}, 129 (2003) 319--328.

\bibitem{DSLW2017}
Y-P.~Deng, Y-Q.~Sun, Q.~Liu and H.-C.~Wang, Efficient dominating sets in circulant graphs,
\emph{Discrete Math.}, 340 (2017) 1503--1507.

\bibitem{FHZ2017}
R.~Feng, H.~Huang and S.~Zhou, Perfect codes in circulant graphs,
\emph{Discrete Math.}, 340 (2017) 1522--1527.

\bibitem{GAP4}
The GAP~Group, \emph{GAP -- Groups, Algorithms, and Programming,
Version 4.12.2};
2022,
\url{https://www.gap-system.org}.

\bibitem{HBZ2018}
H.~Huang, B.~Xia and S.~Zhou,
Perfect codes in Cayley graphs.
\emph{SIAM J. Discrete Math.}, 32 (2018) 548--559.

\bibitem{L2001}
J.~Lee, Independent perfect domination sets in Cayley graphs,
\emph{J. Graph Theory}, 37 (2001) 213--219.

\bibitem{LPS1987}
M.~W.~Liebeck, C.~E.~Praeger and J.~Saxl,
A classification of the maximal subgroups of the
finite alternating and symmetric groups,
\emph{J.~Algebra}, 111 (1987) 365--383.

\bibitem{MWWZ2020}
X.~Ma, G.~L.~Walls, K.~Wang and S.~Zhou,
Subgroup Perfect Codes in Cayley Graphs.
\emph{SIAM~J.~Discrete~Math.}, 34:3 (2020) 1909--1921.

\bibitem{WZ2023}
Y.~Wang and J.~Zhang, Perfect codes in vertex-transitive graphs.
\emph{J. Combin. Theory Ser. A}, 196 (2023) 105737.

\bibitem{Z2023}
J.~Zhang,
Characterizing subgroup perfect codes by $2$-subgroups.
\emph{Des.~Codes~Cryptogr.}, 91 (2023) 2811--2819.

\bibitem{ZZ2021}
J.~Zhang and S.~Zhou, On subgroup perfect codes in Cayley graphs,
\emph{European J. Combin.}, 91 (2021) 103228. Corrigenda: \url{https://arxiv.org/abs/2006.11104v2}.

\bibitem{Z2019}
S.~Zhou, Cyclotomic graphs and perfect codes,
\emph{J. Pure Appl. Algebra}, 223 (2019) 931--947.

\end{thebibliography}
\end{document}